\documentclass[11pt]{article}
 \usepackage{epsfig}
 \usepackage{amssymb,amsmath,amsthm,amscd}
\usepackage{latexsym}
\usepackage{graphicx}
\usepackage{color}
\usepackage{mdwlist}
\usepackage{subfigure}
\usepackage{array}
\usepackage{bigints}
\usepackage{pgfplots}
\usepackage{hyperref}
 \textwidth      142 mm
 \textheight     225 mm
 \topmargin       -10 mm
 \oddsidemargin    9 mm

 \newtheorem{thm}{Theorem}[section]
 \newtheorem{lem}[thm]{Lemma}
  \newtheorem{rem}[thm]{Remark}
 
 \newtheorem{prop}[thm]{Proposition}
 \newtheorem{cor}[thm]{Corollary}

\def\curl{\mbox{curl\,}}
 \newcommand{\n}{{\bf n}}
\newcommand{\bu}{{\bf u}}

\newcommand{\R}{\mathbb {R}}

\newcommand{\Z}{\mathbb {Z}}
 \title{Bifurcation and stability of stationary shear flows of Ericksen-Leslie model for nematic liquid crystals}


\usepackage{authblk}
\author{Weishi Liu\thanks{Department of Mathematics, University of Kansas. Email: \texttt{wsliu@ku.edu}}\,\, \& Majed Sofiani \thanks{Computer, Electrical, Mathematical Sciences \& Engineering Division, King Abdullah University of Science and
Technology (KAUST). Email: \texttt{majed.sofiani@kaust.edu.sa}}}
\setcounter{Maxaffil}{0}

{\tiny}

 \begin{document}
 \maketitle

\begin{abstract}  In this work, focusing on a critical case for shear flows of nematic liquid crystals, we investigate multiplicity and stability of stationary solutions via the parabolic Ericksen-Leslie system. { We establish a one-to-one correspondence between the set of the stationary solutions with the set of the solutions of {\em an algebraic equation} for a cusp case. This one-to-one correspondence is established essentially based on the treatment in the work of Jiao, et. al. [{\em J. Diff. Dyn. Syst. {\bf 34} (2022), 239-269}] for a different case, and the relation  gives directly parameter ranges for existence of multiple stationary solutions; in particular,  multiple stationary solutions are created through countably many saddle-node bifurcations for {\em the algebraic equation} at critical shear speeds.  The main result of the paper is on the stability of stationary solutions associated to the bifurcations;} more precisely, (i) for each critical shear speed,  there is a unique stationary solution and, for smaller shear speed,  the stationary solution disappears but, for larger shear speed,  two stationary solutions nearby bifurcate;  (ii) more importantly, under a generic condition,  there is a simple zero eigenvalue for the linearization of the shear flow at the critical stationary solution and, for larger shear speed, the zero eigenvalue bifurcates to a negative eigenvalue for one of the two stationary solutions  and to a  positive eigenvalue for the other stationary solution.   

\bigskip

\end{abstract}
{\large}

\newpage
\tableofcontents

\newpage

 \section{Introduction}\label{IntroSect}
 \setcounter{equation}{0}

\hspace{\parindent} Liquid crystals represent a distinct state of matter that combines fluidity with anisotropic properties typically associated with crystalline solids. Among their various forms, the \emph{nematic} phase is characterized by molecules that are effectively rod-like or thread-like. At the macroscopic level, a nematic liquid crystal is described by a \textit{velocity field} \( \mathbf{u} \in \mathbb{R}^3 \) governing fluid motion and a \textit{director field} \( \mathbf{n} \in \mathbb{S}^2 \) representing the average molecular alignment. These fields are strongly coupled: distortions in \( \mathbf{n} \) can induce flow \( \mathbf{u} \), and conversely, fluid motion affects molecular orientation. 

 The development of modeling and theory for nematic liquid crystals evolved over several decades. In the 1920s, Oseen \cite{oseen33} and Zocher \cite{Zocher1927} proposed an initial model describing the elastic energy associated with deformations of the director field \( \mathbf{n} \), capturing the equilibrium response to splay, twist, and bend distortions. 
 
In the early 1960s, Ericksen \cite{Eri76} extended this framework by developing a variational theory that treated nematic liquid crystals as continuum materials with internal orientational structure. 
Soon after, Ericksen proposed a dynamic extension \cite{ericksen1961}, incorporating conservation laws of mass, momentum, and angular momentum into a framework suitable for time-dependent behavior, though the model lacked full constitutive specifications.
Leslie \cite{leslie1968some,les} completed the dynamic theory in the late 1960s by introducing a full hydrodynamic model. He derived constitutive equations for the stress tensor, including anisotropic viscous terms, and introduced six viscosity coefficients that characterize the material’s response to flow and director reorientation. The resulting system coupled the momentum equation for \( \mathbf{u} \) (a Navier--Stokes-type equation with anisotropic stress) with an evolution equation for \( \mathbf{n} \), capturing elasticity, flow-alignment interactions, and non-Newtonian effects. This formed a comprehensive continuum model for the dynamics of nematic liquid crystals.

In the rest of the introduction,  we discuss a brief overview of the model and its key characteristic quantities, and then introduce the initial-boundary value problem for shear flows that will be considered in this work and provide an informal summary of the main results.

\subsection{Ericksen-Leslie model for nematics}
Using the convention to denote $\dot{f}=f_t+\bu\cdot \nabla f$ the material derivative, the full Ericksen-Leslie (EL) model for nematics  is given as follows:
\begin{equation}\label{wlce}
\begin{cases}
\rho\dot \bu+\nabla P=\nabla\cdot\sigma-\nabla\cdot\left(\frac{\partial W}{\partial\nabla \n}\otimes\nabla \n\right),
\\
\nabla\cdot \bu=0,\\ 
\nu{\ddot \n}=\lambda\n-\frac{\partial W}{\partial  \n}-{\bf g}+\nabla\cdot\left(\frac{\partial W}{\partial\nabla \n}\right), \\
|\n|=1.\\
\end{cases}
\end{equation}
In  (\ref{wlce}),  $P$ is the pressure, $\lambda$ is the Lagrangian multiplier of the constraint $|\n|=1$, $\rho$ is the density, $\nu$ is the inertial coefficient of the director $\n$, and as we will describe below, $W, {\bf g}$  and $\sigma$ are the Oseen-Frank energy,  the kinematic transport and the viscous stress tensor, respectively. Here we give a brief overview of the model and the quantities involved (see, e.g., \cite{liucalderer00, frank58, leslie68,DeGP,ericksen62,les, lin89}
for details).\\
\begin{figure}[h]
    \centering
    \subfigure[]{\includegraphics[width=0.24\textwidth]{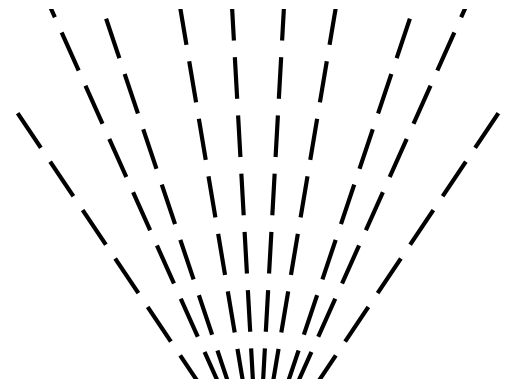}} 
    \subfigure[]{\includegraphics[width=0.24\textwidth]{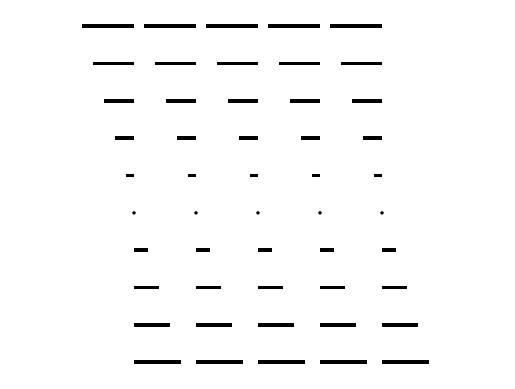}} 
    \subfigure[]{\includegraphics[width=0.24\textwidth]{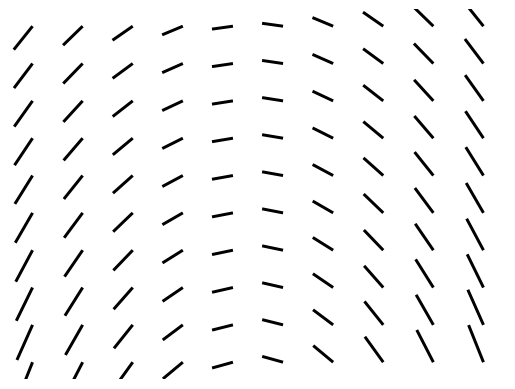}}
    \caption{(a) Spaly: $\nabla\cdot \n\neq 0$ (b) Twist: $\n\cdot\curl\n\neq 0$ (c) Bend: $|\n\times\curl\n|\neq 0$}
    \label{fig:Deform}
\end{figure}

At the level of the molecular structure, the  Oseen-Frank energy density $W$ determines the microstructure of the crystals 
(\cite{oseen33,frank58})
\begin{align}\label{OFE}\begin{split}
2W(\n,\nabla \n)=&K_1(\nabla\cdot \n)^2+K_2(\n\cdot\nabla\times \n)^2+K_3|\n\times(\nabla\times \n)|^2\\
&+(K_2+K_4)[\mbox{tr}(\nabla \n)^2-(\nabla\cdot \n)^2] 
\end{split}
\end{align}
where, as shown in Figure \ref{fig:Deform},  $K_j>0$, $j=1,2,3$, are the coefficients for {\em splay, twist, and bend} effects respectively, with
$K_2\geq |K_4|$, $2K_1\geq K_2+K_4$. This  Oseen-Frank energy density $W(\n,\nabla \n)$ should be viewed as the energy resulting from deforming the material with the above distortions.

The main and the most interesting feature of liquid crystals is the interaction between the molecules and hydrodynamic properties. This interaction is captured in the system above through the following quantities:

First, we introduce the two tensors $D$ and $\omega$ and the vector $N$ 
$$
D= \frac12(\nabla \bu+\nabla^{T} \bu),\quad \omega= \frac12(\nabla \bu-\nabla^{T}\bu),\quad N=\dot \n-\omega \n 
$$
representing the rate of strain tensor, skew-symmetric part of the strain rate, and the
rigid rotation part of director changing rate by fluid vorticity, respectively. The influences between the molecules and the flow are described by 
the kinematic transport tensor $g$ and the viscous stress tensor $\sigma$ given as follows:
 The kinematic transport ${\bf g}$ is given by
\begin{align}\label{g}
{\bf g}=\gamma_1 N +\gamma_2D\n 
\end{align}
which represents the effect of the macroscopic flow field on the microscopic structure. The material coefficients $\gamma_1$ and $\gamma_2$ reflect the molecular shape and the slippery part between fluid and particles. The first term of ${\bf g}$ represents the rigid rotation of molecules, while the second term stands for the stretching of molecules by the flow.
The viscous (Leslie) stress tensor $\sigma$ has the following form
\begin{align}\label{sigma}\begin{split}
\sigma=& \alpha_1 (\n^TD\n)\n\otimes \n +\alpha_2N\otimes \n+ \alpha_3 \n\otimes N\\
&   + \alpha_4D + \alpha_5(D\n)\otimes \n+\alpha_6\n\otimes (D\n),
\end{split}
\end{align}
where $\mathbf{a}\otimes \mathbf{b}=\mathbf{a}\, \mathbf{b}^T$ for column vectors $\mathbf{a}$ and $\mathbf{b}$ in $\mathbb{R}^n$. These coefficients $\alpha_j$ $(1 \leq j \leq 6)$, depending on material and temperature, are called Leslie coefficients.
The following relations are assumed in the literature 
\begin{align}\label{a2g}
\gamma_1 =\alpha_3-\alpha_2,\quad \gamma_2 =\alpha_6 -\alpha_5,\quad \alpha_2+ \alpha_3 =\alpha_6-\alpha_5. 
\end{align} 
They also satisfy the following empirical relations 
\begin{align}\label{alphas}
&\alpha_4>0,\quad 2\alpha_1+3\alpha_4+2\alpha_5+2\alpha_6>0,\quad \gamma_1=\alpha_3-\alpha_2>0,\\
&  2\alpha_4+\alpha_5+\alpha_6>0,\quad 4\gamma_1(2\alpha_4+\alpha_5+\alpha_6)>(\alpha_2+\alpha_3+\gamma_2)^2\notag.
\end{align}

\subsection{Shear flows and informal statement of the results}

{Shear flows are of particular importance in modeling and analyzing fluids especially complex fluids where an extra structure exists. For instance, in liquid crystals the two unknowns are the flow $\bu$ and the molecules orientation $\n$. In such complex systems, simple flows, like shear or Poiseuille flows help understanding the interaction between the two structures in the material. This work extends two existing results:
\begin{itemize}
    \item In \cite{JDEDL}, the authors considered an oversimplified model and studied the existence of multiple stationary solutions and their linear stability. A similar bifurcation phenomenon was shown. This oversimplification allowed the authors to obtain more explicit formulas of solutions to the linear and nonlinear systems.
    \item in \cite{JDDEW}, the authors studied a more realistic model and successfully classified stationary solutions in terms of different ranges of certain physical parameters. Namely, $\gamma_1<|\gamma_2|, \gamma_1=|\gamma_2|$ and $\gamma_1>|\gamma_2|.$ The authors showed the existence of multiple solutions in a similar manner done in this work.
\end{itemize}
This work simultaneously generalizes \cite{JDEDL,JDDEW} in the sense that we consider the more physically realistic model and analyze the stationary solutions, multiplicity, bifurcation and stability. We only consider the case $\gamma_1=|\gamma_2|.$}
\\

For shear flows, with a choice of coordinates system, ${\bf u}$ and ${\bf n}$ take the form
\[{\bf u}(x,t)= (0,0, u(x,t))^T \;\mbox{ and }\;{\bf n}(x,t)=(\sin\theta(x,t),0,\cos\theta(x,t))^T,\]
where the motion ${\bf u}$ is along the $z$-axis and   the director ${\bf n}$ lies in the $xz$-plane with angle $\theta$ measured from the $z$-axis.   For this case of shear or Poiseulle flows, taking $\rho=1$ for simplicity, the EL  model is reduced to (see, e.g., \cite{CHL20}),
\begin{align}\label{sysf1}\begin{split}
     u_t&=\left(g(\theta)u_x+h(\theta)\theta_t\right)_x,\\
\nu\theta_{tt}+\gamma_1\theta_t&=c(\theta)\big(c(\theta)\theta_x\big)_x-h(\theta)u_x.
\end{split}
\end{align}
The functions $c(\theta)$, $g(\theta)$ and $h(\theta)$  are given by  
\begin{align}\label{fgh1}\begin{split}
 g(\theta):=&\alpha_1\sin^2\theta\cos^2\theta+\frac{\alpha_5-\alpha_2}{2}\sin^2\theta+\frac{\alpha_3+\alpha_6}{2}\cos^2\theta+\frac{\alpha_4}{2},\\
  h(\theta):=&\alpha_3\cos^2\theta-\alpha_2\sin^2\theta=\frac{\gamma_1+\gamma_2\cos(2\theta) }{2},\\
  c^2(\theta):=&K_1\cos^2\theta+K_3\sin^2\theta.
 \end{split}
 \end{align}
Very importantly, relations (\ref{a2g}) and (\ref{alphas}) imply that (see formula (2.4) in \cite{CHL20}),  for some constant $ \overline C>0$,
\begin{align}\label{positiveDamping}
g(\theta)\ge g(\theta)-\frac{h^2(\theta)}{\gamma_1}\ge \overline C>0.
\end{align}
{The property (\ref{positiveDamping}) plays crucial roles for the results in \cite{CHL20, CLS} for long time existence of solutions beyond singularity and  will be repeatedly applied in the proof of Theorem \ref{Stab} for linear stability of stationary solutions with small $\bar{u}$.}

{An extensive analysis has been devoted to the analysis of the EL models and a quite fruitful results have been obtained, including finite time blowups and long time existence beyond the blowup (see, e.g., \cite{chen2024initial,CHL20, chen2024singularity,CLS,CS22} and references therein).}

 In this work, we will consider shear flow of the  parabolic EL model with $\nu=0$. More precisely, we consider  an initial-boundary value problem of the system
\begin{align}\label{intro}\begin{split}
u_t&=\big(g(\theta)u_x+h(\theta)\theta_t\big)_x\\
    \gamma_1\theta_t&=c(\theta)(c(\theta)\theta_x)_x-h(\theta)u_x,
\end{split}
\end{align}
along with initial-boundary conditions
\begin{align}\label{ibv}\begin{split}
    &u(0,x)=u^{in}(x), \quad \theta(0,x)=\theta^{in}(x);\\
    &u(t,0)=0,\quad  u(t,1)=\bar u;\quad \theta(t,0)=\theta_0,\quad \theta(t,1)={\theta_1},
    \end{split}
\end{align}
for given functions $u^{in}(x)$ and $\theta^{in}(x)$ and constants $\theta_0,\;\theta_1\in\mathbb{R}$ and $\bar u>0.$


{
The parabolic Ericksen-Leslie model (with zero inertial coefficient $\nu=0$) as well as its approximation versions have been extensively
studied with a vast volume of results, particularly, on well-posedness of Cauchy problems including three dimension (see, e.g.\cite{lin2014recent} and reference therein). For an excellent survey on various topics regarding liquid crystals including model derivations and comparisons between models, we refer readers to the work \cite{lin2016global}. 
 Recently, in \cite{jiang2022zero},  the authors studied the zero inertia limit from the hyperbolic to parabolic Ericksen-Leslie
systems and, for small initial data, they established the convergence of solutions of the hyperbolic system
 to solutions of the parabolic system.}

In \cite{JDDEW}, a rather complete classification of the stationary solutions is provided. In this work, for the case where $\gamma_1=|\gamma_2|$ and $\theta_0=\theta_1$ in (\ref{ibv})
\begin{itemize}
\item[(i)] we recapture the analysis on multiplicity of stationary solutions from \cite{JDDEW}: in particular, it is shown that,
 for small $\bar u$, there is a unique stationary solution,  for large $\bar u>0$, there are  multiple  stationary solutions through a saddle-node bifurcation;

\item[(ii)] the stability of the stationary solutions near the bifurcation moment is examined and a quantitative condition is discovered that determines which of the bifurcating solution is stable and which is unstable;

\item[(iii)] as a relevant portion of this study, we prove, for small $\bar u$, the unique stationary solution   is linearly stable.
\end{itemize}

 We also make remarks for the case $\gamma_1>|\gamma_2|$ and explain the similarity as well as some differences to the case studied here. The case $\gamma_1<|\gamma_2|$ is more involved mostly due to the appearance of different types of stationary solutions {associated with periodic and heteroclinic orbits of the phase space portrait of the stationary systems.} It would be very interesting to explore the expected rich dynamical phenomena in the latter case.

\medskip

The rest of this paper is organized as follows. In Section \ref{onSS} we analyze the steady state system and re-derive the existence of multiple stationary solutions for large enough $\bar u$. In Section \ref{BifCri}, guided by the findings in Section \ref{onSS} and via careful and sharp computations, {we investigate the bifurcation moment of the stationary solutions and establish the bifurcation of the zero eigenvalue associated with the linear system. This is accomplished by using Evans function, but more importantly, by exploiting the relation of integrability of the linearized system with that of nonlinear system for stationary solutions.} In Section \ref{SpecStab}, we prove the sufficient condition to distinguish the linearly stable/unstable stationary solutions bifurcating from a stationary solution with the zero eigenvalue. Finally, in Section \ref{SmallStab}, we prove the linear stability of stationary solutions with small sheer speed.


\section{Multiplicity of steady states for $\gamma_1=|\gamma_2|$ with $\theta_0=\theta_1$}\label{onSS}
\setcounter{equation}{0}

A steady state $(u(x),\theta(x))$ of (\ref{intro}) and (\ref{ibv}) with $\theta_0=\theta_1$  satisfies  
\begin{align}\label{ssth}
\big(g(\theta)u_x\big)_x=0,\quad c(\theta)(c(\theta)\theta_x)_x-h(\theta)u_x=0
\end{align}
and
\begin{align}\label{bc}
    u(0)=0,\; u(1)=\bar u>0;\quad
        \theta(0)=\theta(1)=\theta_0.
\end{align}

From the first equation \eqref{ssth}, one has
\begin{align}\label{varM}
 g(\theta)u_x=M^2,\;\mbox{ or equivalently, }\;   u(x)=M^2\int_0^x\frac{1}{g(\theta(s))}\,ds,
\end{align}
for some $M>0$ to be determined. Substituting \eqref{varM} into \eqref{ssth}, one obtains
\begin{align}\label{2nd}
    c(\theta)(c(\theta)\theta_x)_x-\frac{h(\theta)}{g(\theta)}M^2=0.
\end{align}

Equation  \eqref{2nd} can be rewritten as a system of first order ODEs
\begin{align}\begin{split}\label{storder}
   \theta_x&=\frac{\eta}{c^2(\theta)},\quad
   \eta_x=\frac{c'(\theta)}{c^3(\theta)}\eta^2+\frac{h(\theta)}{g(\theta)}M^2,
   \end{split}
\end{align}
which is a Hamiltonian system with a Hamiltonian function
\begin{align}\label{HamFun}
    H(\theta, \eta)=\frac{1}{2c^2(\theta)}\eta^2-{M^2}G(\theta)\;\mbox{ where }\; G(\theta)=\int_{\theta_0}^\theta\frac{h(s)}{g(s)}ds.
\end{align}

One can then get the phase portrait easily (see Figure~\ref{fig:phase}).
{Recall that the phase portrait is a collection of ``representative" orbits of the dynamical system (2.5); that is, a collection of {\em parametric} curves $(\theta(x), \eta(x))$ of the solution of (2.5). It should be pointed out that, although the independent variable $x$ is not represented in the orbits, near equilibria (cusp-type when $\gamma_1 = |\gamma_2|$), the orbits take more $x$-range to pass around equilibria. Since, for the BVP, $x\in [0,1]$, thus $M$ or $\bar{u}$ must be large if the orbit for the BVP is close to the cusps. It turns out the presence of the cusps is exactly the underlying reason for the property (ii) in Proposition 2.2, which is the key for the main results, Theorem 2.3 and Corollary 2.4, for the stationary solutions in this section.  }
Note that, for $\gamma_1=|\gamma_2|$, the system \eqref{storder} has a set of equilibria $\{(e_n,0): n\in\Z\}$ with $e_n=\frac{2n+1}{2}\pi$ for $\gamma_1=\gamma_2$ and $e_n=n\pi$  for $\gamma_1=-\gamma_2$. In both cases, the equilibria are {\em cusps}. 



\begin{figure}
\centering
\includegraphics[width=0.8 \textwidth]{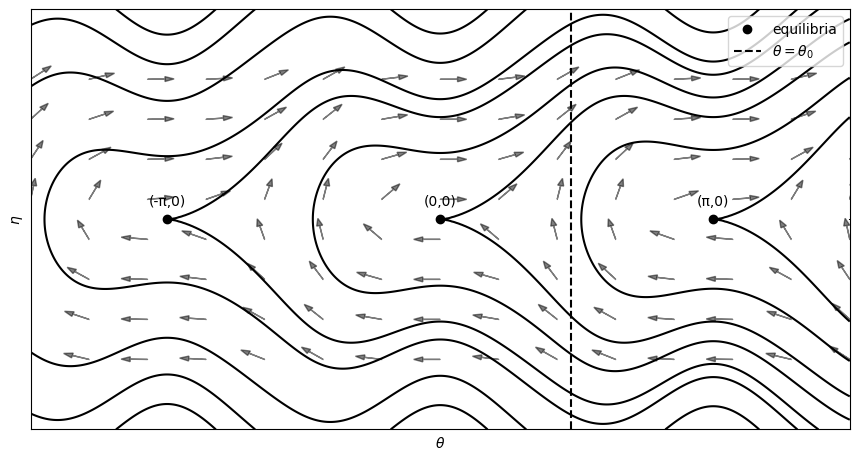}
\caption{\em The phase portrait for the case $\gamma_1=-\gamma_2,\, (\gamma_2<0)$ For any $n\in \mathbb{N}\cup \{0\},$ the point $(\pm n\pi,0)$ is an equilibrium point (cf. equations \eqref{storder}). A solution to the system is a trajectory starting at $x=0$ from the vertical dashed line (at $\theta=\theta_0)$ following the vector field and hitting the same vertical dashed line ($\theta=\theta_0)$ at $x=1.$ } 
\label{fig:phase}
\end{figure}

Suppose $\theta_0\not\in\{e_n\}_{n\in\Z}$ (Otherwise, some minor issues may disrupt the discussion a bit). Let $(\theta(x),\eta(x))$ be a solution of the stationary boundary value problem (BVP) \eqref{storder} and \eqref{bc} and let $(\tilde\theta, 0)$ be the intersection of the orbit with the $\theta$-axis. 
Due to the symmetry of the BVP about the $\theta$-axis, one has $(\tilde\theta, 0)=(\theta(1/2),\eta(1/2))$.
 
Applying the Hamiltonian first integral, one has 
\begin{align*}
  \frac{\eta^2}{2c^2(\theta)}-{M^2}G(\theta)=-{M^2}G(\tilde\theta)
\end{align*}
  For $x\in (1/2,1)$,
\begin{align}\label{eq4theta}
    \frac{d\theta}{dx}=\frac{\sqrt{2}M}{c(\theta)}\sqrt{G(\theta)-G(\tilde\theta)}.
\end{align}
Separating  variables in  \eqref{eq4theta} and integrating from $x=1/2$ to $x=1$, one has
\begin{align}\label{box1}
M=\sqrt{2}\int_{\tilde\theta}^{{\theta_0}}\frac{c(\theta)}{\sqrt{G(\theta)-G(\tilde\theta)}}\,d\theta.
\end{align}
On the other hand, it follows from \eqref{varM} that
\begin{align}
   \displaystyle M^2=\frac{\bar u}{\int_0^1\frac{1}{g(\theta(s))}ds}, 
\end{align}
which can be rewritten as
\begin{align}\label{Mexp}
   \displaystyle M^2=\frac{\bar u}{\sqrt{2}\int_{\tilde\theta}^{{\theta_0}}\frac{c(\theta)}{M\,g(\theta)\sqrt{G(\theta)-G(\tilde\theta)}}\,d\theta},
\end{align}
where the following change of variable is used
\begin{align*}
 \int_0^1\frac{1}{g(\theta(s))}ds=2\int_{1/2}^{1}\frac{1}{g(\theta(s))}ds=\sqrt{2}\int_{\tilde\theta}^{{\theta_0}}\frac{c(\theta)}{M\,g(\theta)\sqrt{G(\theta)-G(\tilde\theta)}}\,d\theta.
\end{align*}
Combining \eqref{box1} and \eqref{Mexp}, 
we obtain
\begin{align}\label{Eq4SS}\begin{split}
    \bar u&=\sqrt{2}\,M\int_{\tilde\theta}^{{\theta_0}}\frac{c(\theta)}{g(\theta)\sqrt{G(\theta)-G(\tilde\theta)}}\,d\theta\\
    &=2\int_{\tilde\theta}^{{\theta_0}}\frac{c(\theta)}{\sqrt{G(\theta)-G(\tilde\theta)}}\,d\theta\,\int_{\tilde\theta}^{{\theta_0}}\frac{c(\theta)}{g(\theta)\sqrt{G(\theta)-G(\tilde\theta)}}\,d\theta.
    \end{split}
\end{align} 
\begin{lem}\label{invG}
For $\gamma_1= |\gamma_2|,$ the function $G$ is invertible.
\end{lem}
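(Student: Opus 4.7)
The plan is to show that $G$ is strictly monotonic, from which invertibility follows. Since $G'(\theta) = h(\theta)/g(\theta)$ and $g(\theta) \geq \overline{C} > 0$ by \eqref{positiveDamping}, the sign and zero set of $G'$ are determined entirely by $h(\theta)$.

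First I would exploit the explicit form
\[ h(\theta) = \frac{\gamma_1 + \gamma_2 \cos(2\theta)}{2} \]
given in \eqref{fgh1}, together with $\gamma_1 > 0$ from \eqref{alphas}, to handle the two sub-cases separately. When $\gamma_2 = \gamma_1$, one rewrites
\[ h(\theta) = \frac{\gamma_1}{2}\bigl(1 + \cos(2\theta)\bigr) = \gamma_1 \cos^2\theta \geq 0, \]
with zeros exactly at the discrete set $\{e_n = \frac{2n+1}{2}\pi : n \in \mathbb{Z}\}$. When $\gamma_2 = -\gamma_1$, the analogous computation gives
\[ h(\theta) = \frac{\gamma_1}{2}\bigl(1 - \cos(2\theta)\bigr) = \gamma_1 \sin^2\theta \geq 0, \]
with zeros exactly at $\{e_n = n\pi : n \in \mathbb{Z}\}$.

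In either case $h(\theta) \geq 0$ on $\mathbb{R}$ and vanishes only on a discrete set, so $G'(\theta) = h(\theta)/g(\theta) \geq 0$ and is strictly positive except at the isolated points $\{e_n\}$. Consequently $G$ is strictly increasing on $\mathbb{R}$, and therefore invertible onto its range.

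There is essentially no obstacle here; the statement is a direct consequence of the algebraic identities for $h$ in the critical case $\gamma_1 = |\gamma_2|$, combined with the uniform positivity of $g$ already recorded in \eqref{positiveDamping}. The only mild subtlety is that $G'$ vanishes at the cusp equilibria $\{e_n\}$, so monotonicity must be argued through the vanishing being isolated rather than through strict positivity of $G'$ everywhere.
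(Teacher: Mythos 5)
Your argument is correct and follows essentially the same route as the paper: compute $G'(\theta) = h(\theta)/g(\theta)$, use the positivity of $g$ from \eqref{positiveDamping}, and observe that $h\ge 0$ with zeros only at the isolated points $\{e_n\}$ when $\gamma_1 = |\gamma_2|$. You make the algebra slightly more explicit by writing $h = \gamma_1\cos^2\theta$ or $\gamma_1\sin^2\theta$, but the substance matches the paper's proof.
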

\begin{proof} It follows from the definition of $G$ in \eqref{HamFun} 
\[G'(\theta)=\frac{h(\theta)}{g(\theta)}.\]
Then $g(\theta)>0$ from \eqref{positiveDamping} and $h(\theta)=(\gamma_1+\gamma_2\cos(2\theta))/2$ yield that, for $\gamma_1=|\gamma_2|$,
$G'(\theta)\ge 0$, and the equal sign occurs only at $e_n=n\pi$ or $e_n=\frac{2n+1}{2}\pi$ for $n\in\Z$ depending on $\gamma_1=-\gamma_2$ or $\gamma_1=\gamma_2$ respectively.
The claim then follows.
\end{proof}
Denote the inverse of $G$ by $F.$ Set  $\beta=-G(\tilde\theta)$. Then $\beta>0$. Make a  change of variables $s=-G(\theta)$ (or equivalently, $\theta=F(-s)$) for the integrals in  \eqref{Eq4SS} to get 
\begin{align}\label{ubar}\begin{split}
    \frac{\bar u}{2}&=\int_0^\beta\frac{c(F(-s))F'(-s)}{\sqrt{\beta-s}}\,ds\int_0^\beta\frac{c(F(-s))F'(-s)}{g(F(-s))\sqrt{\beta-s}}\,ds\\
    &=\int_0^\beta\frac{c(F(-s))g(F(-s))}{h(F(-s))\sqrt{\beta-s}}\,ds\int_0^\beta\frac{c(F(-s))}{h(F(-s))\sqrt{\beta-s}}\,ds\\
    &=\int_0^\beta\frac{cg}{h}(F(t-\beta))\frac{1}{\sqrt{t}}\,dt\int_0^\beta\frac{c}{h}(F(t-\beta))\frac{1}{\sqrt{t}}\,dt\\
    &=:D(\beta).
    \end{split}
\end{align}
Recall that  $\{e_n\}_{n\in\Z}$ is the set of points where $h(e_n)=0.$ Set $\beta_n=-G(e_n)$ .
\begin{prop}\label{lemmaD} For $\gamma_1=|\gamma_2|$,  the function $D(\beta)$ is defined on $\R_+\backslash\{\beta_n\}_{n\in\Z}$. Furthermore,   one has (see Fig. \ref{fig:D})
\begin{itemize}
\item[(i)] $D(\beta)>0$  on its domain, $D(0^+)=0$, $D(\infty)=\infty$, and 
\item[(ii)]    $D(\beta)\to +\infty$ as $\beta\to \beta_n$ for $n\in \Z$.
\end{itemize}  
     \end{prop}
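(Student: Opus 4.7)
The plan is to reduce all four assertions to the local behavior of $h$, $G$, and $F$ near the equilibria $\{e_n\}$. Under $\gamma_1 = |\gamma_2|$, $h$ has a double zero at each $e_n$ with $h(\theta) = \gamma_1(\theta - e_n)^2 + O((\theta - e_n)^4)$, and since $g(e_n) > 0$ this integrates to $G(\theta) - G(e_n) = C_n(\theta - e_n)^3 + O((\theta - e_n)^5)$ with $C_n := \gamma_1/(3 g(e_n)) > 0$. Inverting gives $F(s) - e_n \sim C_n^{-1/3}(s + \beta_n)^{1/3}$ near $s = -\beta_n$. Substituting back, $(c/h)(F(t-\beta))$ has exactly a $|t - t^\ast_n|^{-2/3}$ singularity at $t^\ast_n := \beta - \beta_n$, integrable in isolation but not when $t^\ast_n = 0$.

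With these asymptotics, the domain and positivity claims in (i) are immediate. The integrands are smooth and strictly positive wherever $h \circ F \neq 0$; the $t^{-1/2}$ singularity at $t = 0$ is always integrable; and each interior $|t - t^\ast_n|^{-2/3}$ with $t^\ast_n \in (0,\beta)$ is integrable. These singularities are disjoint precisely when $\beta \notin \{\beta_n\}_{n \in \Z}$; when $\beta = \beta_n$ the two collide at $t = 0$ into a nonintegrable $t^{-7/6}$. Positivity of $D(\beta)$ is clear because both integrands are strictly positive a.e. on $(0,\beta)$.

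The limits $D(0^+) = 0$ and $D(\infty) = \infty$ follow from simple sandwich estimates. As $\beta \to 0^+$, $F(t - \beta) \to \theta_0 \notin \{e_n\}$ uniformly on $[0,\beta]$, so the two integrands are uniformly bounded by constants $K_1, K_2$; each factor is then $\leq K_i\int_0^\beta t^{-1/2}\,dt = 2 K_i\sqrt{\beta}$, giving $D(\beta) = O(\beta) \to 0$. As $\beta \to \infty$, the global bounds $h \leq \gamma_1$ and $c, g$ bounded below yield $(c/h),\,(cg/h) \geq \mathrm{const} > 0$ pointwise, hence each factor is $\geq \mathrm{const}\cdot\sqrt{\beta}$ and $D(\beta) \geq \mathrm{const}\cdot\beta \to \infty$.

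The main obstacle is claim (ii). The cleanest route is to show that the single factor $B(\beta) := \int_0^\beta (c/h)(F(t - \beta))\,t^{-1/2}\,dt$ already diverges as $\beta \to \beta_n$; combined with strict positivity of the other factor, this forces $D(\beta) \to \infty$. Setting $\epsilon := |\beta - \beta_n|$, for $\beta > \beta_n$ I restrict to $t \in [0, 2\epsilon]$ and use $(c/h)(F(t - \beta)) \geq K\,|t - \epsilon|^{-2/3}$ there; the rescaling $t = \epsilon\tau$ then extracts
\[
B(\beta) \;\geq\; K\,\epsilon^{\,1 - 1/2 - 2/3}\int_0^2 \frac{d\tau}{\sqrt{\tau}\,|\tau - 1|^{2/3}} \;=\; K'\,\epsilon^{-1/6} \;\longrightarrow\; \infty.
\]
For $\beta < \beta_n$, I pass back to the $\theta$-form $\int_{\tilde\theta}^{\theta_0} c(\theta)/[g(\theta)\sqrt{G(\theta) - G(\tilde\theta)}]\,d\theta$, substitute $u = \theta - e_n$, $u_0 = \tilde\theta - e_n > 0$, and use $G(\theta) - G(\tilde\theta) \sim C_n(u^3 - u_0^3)$; the rescaling $u = u_0 v$ produces a $u_0^{-1/2}$ factor, and since $u_0 \sim C_n^{-1/3}\epsilon^{1/3}$ this again gives $\mathrm{const}\cdot\epsilon^{-1/6}$. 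The careful bookkeeping of exponents ($1 - \tfrac12 - \tfrac23 = -\tfrac16$) and the matching of the two-sided limits $\beta \to \beta_n^\pm$ are the only real subtlety; everything else reduces to dominated convergence.
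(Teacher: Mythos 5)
Your proposal is correct and is noticeably more careful than the paper's own argument; in fact it quietly corrects an error there. The paper's proof of part (ii) expands $G$ about $e_n$ and asserts ``$G''(e_n):=(h/g)'(e_n)\neq 0$'' in order to factor out $(z-e_n)$ from $\sqrt{G(z)-G(e_n)}$. But this is false: under $\gamma_1=|\gamma_2|$ one has $h(\theta)=\gamma_1\sin^2\theta$ (or $\gamma_1\cos^2\theta$), so $h$ has a \emph{double} zero at $e_n$, hence $h'(e_n)=0$ and $G''(e_n)=\bigl(h'g-hg'\bigr)/g^2\big|_{e_n}=0$. You correctly replace this with the cubic vanishing $G(\theta)-G(e_n)=C_n(\theta-e_n)^3+O((\theta-e_n)^5)$, $C_n=\gamma_1/(3g(e_n))>0$, which is what actually holds. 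The paper's conclusion is unaffected (a $(z-e_n)^{-3/2}$ singularity diverges a fortiori), but your version is the accurate one.

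Beyond that fix, your route is conceptually the same (reduce to a Taylor expansion of $G$ near $e_n$), but you extract more: the $|t-t_n^*|^{-2/3}$ profile of the integrand, which shows in one stroke both that $D(\beta)$ is \emph{finite} on $\R_+\setminus\{\beta_n\}$ (interior $2/3$-singularities are integrable — a point the paper leaves implicit) and that $D(\beta)\to\infty$ at the rate $\epsilon^{-1/6}$ as $\beta\to\beta_n^{\pm}$; the two-sided treatment (rescaling $t=\epsilon\tau$ for $\beta>\beta_n$, and the $u=u_0v$ rescaling in the $\theta$-variable for $\beta<\beta_n$) is cleaner than the paper's, which sets $a=e_n$ directly and would need a Fatou-type step to convert ``$\int_{e_n}^{\theta_0}=\infty$'' into ``$\lim_{a\to e_n}\int_a^{\theta_0}=\infty$.'' Your bounds for $D(0^+)=0$ and $D(\infty)=\infty$ are also the right elementary sandwich estimates that the paper dismisses as ``clear.'' The one implicit hypothesis you are leaning on in the $D(0^+)$ limit is $\theta_0\notin\{e_n\}$ (so that $h(\theta_0)\neq 0$ and the integrands are bounded near $t=0$); the paper does assume this earlier, so it is fine, but worth stating.
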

     \begin{proof} (i). Since $\gamma_1=|\gamma_2|,$ we have $h\geq 0.$ Hence for $\beta>0,$ we have $D(\beta)>0.$ Clearly, $D(0^+)=0$ and $D(\infty)=\infty.$
     
(ii). Recall that $e_n$ is a point where $h(e_n)=0.$ By the change of variables above, $s=-G(\theta)$ and then $\beta-s=t,$ it suffices to show that,
 as $a\to e_n$, either
\begin{align}
        \int_{a}^{\theta_0}\frac{c(z)}{\sqrt{G(z)-G(a)}}\,dz\to \infty\quad\text{or}\quad \int_{a}^{\theta_0}\frac{c(z)}{g(z)\sqrt{G(z)-G(a)}}\,dz\to \infty.
    \end{align}

In fact, both integrals go to infinity. We will only prove it for the first integral and the proof for the second is exactly the same. \\

 We expand ${G(z)}$ around $e_n$ and use $G'(e_n)=h(e_n)/{g(e_n)}=0$ to have
\begin{align*}
    G(z)&=G(e_n)+G'(e_n)(z-e_n)+\frac{G''(e_n)}{2!}(z-e_n)^2+...\\
 &=G(e_n)+\frac{G''(e_n)}{2!}(z-e_n)^2+...\\
 &=G(e_n)+\sum_{p=2}^{\infty}\frac{G^{(p)}(e_n)}{p!}(z-e_n)^p
\end{align*}
Now we rewrite the integral,
\begin{align}
\displaystyle\bigintsss_{e_n}^{\theta_0}\displaystyle\frac{c(z)}{\sqrt{G(z)-G(e_n)}}\,dz=\displaystyle\bigintsss_{e_n}^{\theta_0}\displaystyle\frac{c(z)}{\displaystyle\sqrt{\sum_{p=2}^{\infty}\frac{G^{(p)}(\tilde{\theta})}{p!}(z-e_n)^p}}\,dz.
\end{align}
Since $G''(e_n):=(\frac{h}{g})'(e_n)\neq 0,$ then we can write
\begin{align*}
\bigintsss_{e_n}^{\theta_0}\displaystyle\frac{c(z)}{\sqrt{\sum_{n=2}^{\infty}\frac{G^{(p)}(e_n)}{p!}(z-e_n)^p}}\,dz=\bigintsss_{e_n}^{\theta_0}\displaystyle\frac{c(z)}{(z-e_n)\displaystyle\sqrt{\sum_{p=2}^{\infty}\frac{G^{(p)}(e_n)}{p!}(z-e_n)^{p-2}}}\,dz.
\end{align*}
It is clear that the integral diverges,  that is, $D(\beta)\to+\infty,$ as $\beta\to \beta_n.$
\end{proof}

\begin{figure}[h]
\centering
\includegraphics[width=0.4\textwidth]{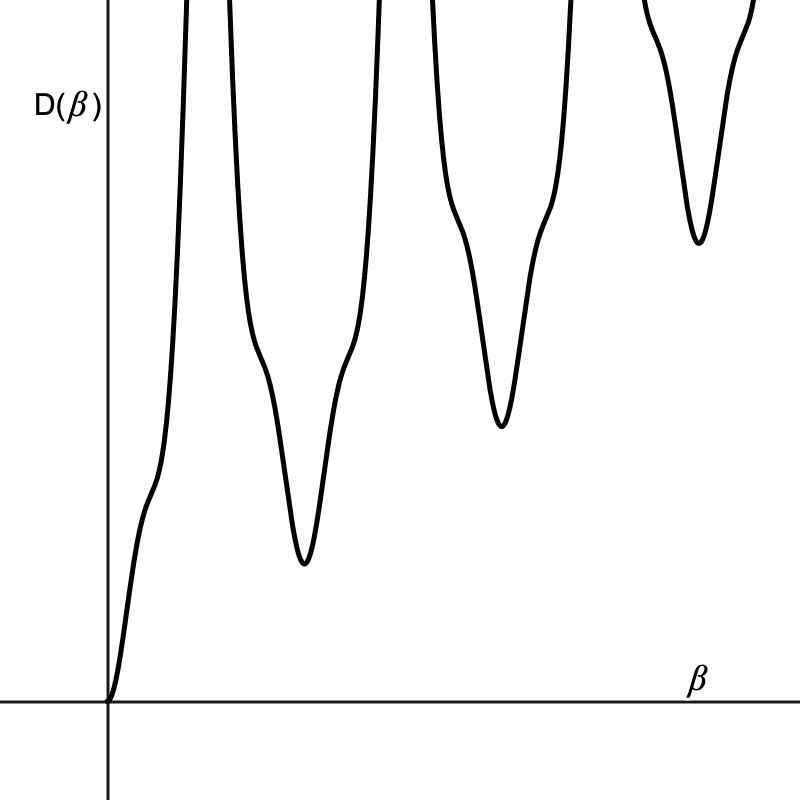}
\caption{\em A sketch of the graph of $D(\beta)$ for $\gamma_1=|\gamma_2|$ (See Prop. \ref{lemmaD}).}
\label{fig:D}
\end{figure}
Therefore, we have the following characteristic for stationary solutions.
\begin{thm}\label{propD} For $\gamma_1= |\gamma_2|$,
there is a one-to-one correspondence between stationary solutions $(u,\theta)$ of \eqref{ssth}-\eqref{bc} and solutions $\beta$ of $2D(\beta)=\bar u$. 
\end{thm}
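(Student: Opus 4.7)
The plan is to exhibit the bijection by constructing explicit maps in both directions and invoking the Hamiltonian structure of \eqref{storder}.

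First I would build the forward map $(u,\theta)\mapsto \beta$. Given a stationary solution, the $\theta$-component is an orbit of the Hamiltonian system \eqref{storder} with $M$ determined by \eqref{varM}. The key step is to show that $\theta$ is symmetric about $x=1/2$. For this, I would set $\psi(x):=\theta(1-x)$ and $\xi(x):=-\eta(1-x)$ and verify directly that $(\psi,\xi)$ solves the same system \eqref{storder}. Energy conservation gives $\eta(0)^2=\eta(1)^2$; the phase portrait for $\gamma_1=|\gamma_2|$ (Figure \ref{fig:phase}) contains no nontrivial closed orbit, so the alternative $\eta(1)=\eta(0)$ is excluded and we must have $\eta(1)=-\eta(0)$. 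Uniqueness for the IVP then forces $(\psi,\xi)\equiv(\theta,\eta)$, hence $\eta(1/2)=0$ and $\tilde\theta:=\theta(1/2)$ is the turning point of the orbit. Setting $\beta:=-G(\tilde\theta)>0$, the separation of variables carried out to obtain \eqref{Eq4SS} produces $\bar u=2D(\beta)$.

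Next I would construct the inverse map $\beta\mapsto(u,\theta)$. Given $\beta>0$ with $2D(\beta)=\bar u$, set $\tilde\theta:=F(-\beta)$ (well-defined by Lemma~\ref{invG}) and define $M>0$ by \eqref{box1}. Solve \eqref{storder} with initial data $(\tilde\theta,0)$ at $x=1/2$; by the reflection symmetry noted above the resulting orbit satisfies $\theta(0)=\theta(1)$, and the branch selected by \eqref{eq4theta} (together with the definition of $M$) makes this common value equal to $\theta_0$. Then define $u(x):=M^2\int_0^x g(\theta(s))^{-1}\,ds$, so $u(0)=0$ automatically, while reversing the chain of identities between \eqref{Mexp} and \eqref{ubar} gives $M^2\int_0^1 g(\theta(s))^{-1}\,ds=\bar u$, i.e.\ $u(1)=\bar u$. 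A direct check confirms $(u,\theta)$ solves \eqref{ssth}-\eqref{bc}.

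Injectivity of each construction is then transparent: distinct values of $\beta$ yield distinct turning points $\tilde\theta=F(-\beta)$ (since $F$ is a bijection) and hence distinct orbits, while the forward map manifestly recovers $-G(\theta(1/2))$ from $(u,\theta)$. The main obstacle I expect lies in the forward direction, namely ruling out that a stationary solution could correspond to an orbit passing through more than one turning point or winding nontrivially; this is exactly where the hypothesis $\gamma_1=|\gamma_2|$ enters decisively, through the cusp structure of the equilibria $e_n$ and the resulting absence of closed orbits visible in Figure \ref{fig:phase}.
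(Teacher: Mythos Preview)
Your proposal is correct and follows the same approach as the paper. The paper's own proof is the single sentence ``The proof follows from the above construction of the map $D(\beta)$,'' so your explicit verification of the bijection---the reflection argument for symmetry about $x=1/2$, the inverse construction via $\tilde\theta=F(-\beta)$ and \eqref{box1}, and the injectivity check---simply spells out details that the paper leaves implicit in the derivation preceding the theorem.
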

\begin{proof}
    The proof follows from the above construction of the map $D(\beta)$.
\end{proof}


     Combining Theorem \ref{propD} and Proposition \ref{lemmaD}, we conclude that 
    \begin{cor}\label{multi} For $\gamma_1=|\gamma_2|$, if $\bar u$ is large enough, then the stationary problem \eqref{ssth}-\eqref{bc} has multiple solutions.
    \end{cor}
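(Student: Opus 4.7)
Theorem~\ref{propD} reduces the claim to a one-dimensional scalar question: it suffices to show that, for all sufficiently large $\bar u > 0$, the equation $2D(\beta) = \bar u$ admits at least two distinct solutions in the domain $\R_+ \setminus \{\beta_n\}_{n\in\Z}$. Thus I would set aside the PDE entirely and work purely with the function $D$ whose qualitative properties were already pinned down in Proposition~\ref{lemmaD}.

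The strategy is to exploit a single ``pocket'' of $D$ between two consecutive vertical asymptotes. Fix any $n \in \Z$ for which $(\beta_n, \beta_{n+1}) \subset \R_+$. On this open interval, $D$ is continuous and positive by Proposition~\ref{lemmaD}(i), while Proposition~\ref{lemmaD}(ii) gives
\begin{equation*}
\lim_{\beta \to \beta_n^+} D(\beta) = \lim_{\beta \to \beta_{n+1}^-} D(\beta) = +\infty.
\end{equation*}
Consequently $D$ attains a minimum value $m_n := \min_{\beta \in (\beta_n, \beta_{n+1})} D(\beta) > 0$ at some interior point $\beta^\ast \in (\beta_n, \beta_{n+1})$. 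For any $\bar u$ with $\bar u / 2 > m_n$, the horizontal level $\bar u / 2$ lies strictly above $D(\beta^\ast)$ and strictly below the boundary values of $D$, so the intermediate value theorem, applied separately on $(\beta_n, \beta^\ast]$ and on $[\beta^\ast, \beta_{n+1})$, produces two distinct roots of $2D(\beta) = \bar u$ within this one pocket. Invoking Theorem~\ref{propD} lifts these two roots to two distinct stationary solutions of \eqref{ssth}--\eqref{bc}, which is the multiplicity claim.

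There is really no serious obstacle here, since all of the analytic work was done in establishing Proposition~\ref{lemmaD} (the positivity and the divergence of $D$ at each $\beta_n$) and Theorem~\ref{propD} (the one-to-one correspondence). The only step that is not immediate is verifying that a pocket $(\beta_n, \beta_{n+1})$ of positive length actually exists inside $\R_+$, which follows at once from the explicit description $\beta_n = -G(e_n)$ with $e_n = n\pi$ or $e_n = (2n+1)\pi/2$ and the fact that $G$ is monotone (Lemma~\ref{invG}), so the $\beta_n$ form a strictly increasing sequence tending to $+\infty$. As a bonus observation, the same intermediate value argument on the leftmost interval $(0, \beta_1)$, where $D(0^+) = 0$ and $D \to +\infty$ at $\beta_1$, guarantees that \emph{at least one} stationary solution exists for every $\bar u > 0$, which is consistent with the informal summary in the introduction.
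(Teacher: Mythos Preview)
Your proof is correct and follows essentially the same approach as the paper: the paper's argument is simply ``Combining Theorem~\ref{propD} and Proposition~\ref{lemmaD},'' and the mechanism you spell out (existence of a minimum on each pocket $(\beta_n,\beta_{n+1})$ followed by the intermediate value theorem on either side) is precisely what the paper makes explicit in Remark~\ref{SNbif4D}. Your added verification that the pockets are nonempty and the side observation about existence for all $\bar u>0$ are both valid and in keeping with the paper's treatment.
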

 
 { 
\begin{rem}\label{SNbif4D} The basic mechanism for the existence of multiple stationary solutions is the well-known {\em saddle-node bifurcation} for the nonlinear equation $D(\beta)=\bar{u}/2$ with $\bar{u}>0$ as a parameter. More precisely, for each interval $I_n:=(\beta_{n},\beta_{n+1})$ with $n\ge 1$,  there is a critical value 
\begin{align}\label{cribaru}
\bar{u}_n=\min\{D(\beta): \beta\in I_n\},
\end{align}
  so that, (i) for $\bar{u}<\bar{u}_n$, there is No solution for $D(\beta)=\bar{u}/2$ over $I_n$, (ii) at the critical moment $\bar{u}=\bar{u}_n$, there is generically a Double root  for $D(\beta)=\bar{u}/2$, and (iii) for $\bar{u}>\bar{u}_n$ (but close), there are (at least) two solutions for $D(\beta)=\bar{u}/2$ in $I_n$.
\end{rem}

This mechanism for creation of multiple stationary solutions strongly suggests a corresponding {\em saddle-node bifurcation} for time evolution infinite-dimensional dynamical system \eqref{intro} and \eqref{ibv}, that is,  in addition to the above described saddle-node bifurcation of stationary solutions, the stability also exhibits a saddle-node bifurcation. More precisely, {for each $n\ge 1$,}  (i) at the bifurcation moment $\bar{u}=\bar{u}_n$, the corresponding stationary solution should have a zero eigenvalue for the linearization of the initial-boundary value problem \eqref{intro} and \eqref{ibv} and, (ii) after the bifurcation, {that is, for $\bar{u}>\bar{u}_n$ and $|\bar{u}-\bar{u}_n|\ll 1$,} the zero eigenvalue becomes negative for one of the two bifurcated stationary solutions and becomes positive for the other bifurcated stationary solution. The main result of this work  establishes this statement and will be presented in the remaining sections.


}

\begin{rem}\label{genCase}
 \begin{itemize}
     \item [(i)] The case $\gamma_1=|\gamma_2|$ {has two subcases:}  $\gamma_1=-\gamma_2$ and $\gamma_1=\gamma_2.$ However, the analysis and results for both subcases are essentially identical. The only minor difference is that the set of equilibria $\{(e_n,0)\}_{n\in\mathbb{N}}$ (or the set of zeros of $h(\theta)$) is shifted by $\pi/2$ which accordingly shifts the graph of $D(\beta).$ That is, the set of points $\{\beta_n\}_{n\in\mathbb{N}},$ where $D(\beta)$ blows up, is redefined to be $\{\beta_n\}_{n\in\mathbb{N}}:=\{-G(e_n+\frac{\pi}{2})\}_{n\in\mathbb{N}}.$ All the results apply.
     \item[(ii)]  For $\gamma_1>|\gamma_2|$, we compare relevant quantities with those for $\gamma_1=|\gamma_2|$. First of all, the equilibria of the system (\ref{storder}) disappear, since $h(\theta)>0$ for all $\theta\in\mathbb{R}$. Lemma \ref{invG} still holds with strict inequality $G'(\theta)>0.$ Consequently, for $\gamma_1>|\gamma_2|$ but close,  the vertical asymptotes of $D(\beta)$ become local maxima. For this situation, saddle-node bifurcations occur also at local maxima.  Of course, it is possible that $D(\beta)$ becomes monotone over a certain range of parameter space. 
 \end{itemize}
\end{rem}

\section{Characteristic of bifurcation moment of stead states}\label{BifCri}
\setcounter{equation}{0}
We will establish that, at the bifurcation moment $\bar{u}=\bar{u}_n$ for $n\ge 1$ defined in \eqref{cribaru}, $\lambda=0$ is an eigenvalue of the linearization of the problem \eqref{intro} and \eqref{ibv} along the corresponding stationary solution, as
  remarked at the end of the previous section (See Figure \ref{fig:D&u}). The claim will be proved by analyzing Evans function for the eigenvalue problem.
\begin{figure}[h]
\centering
\includegraphics[width=0.6\textwidth]{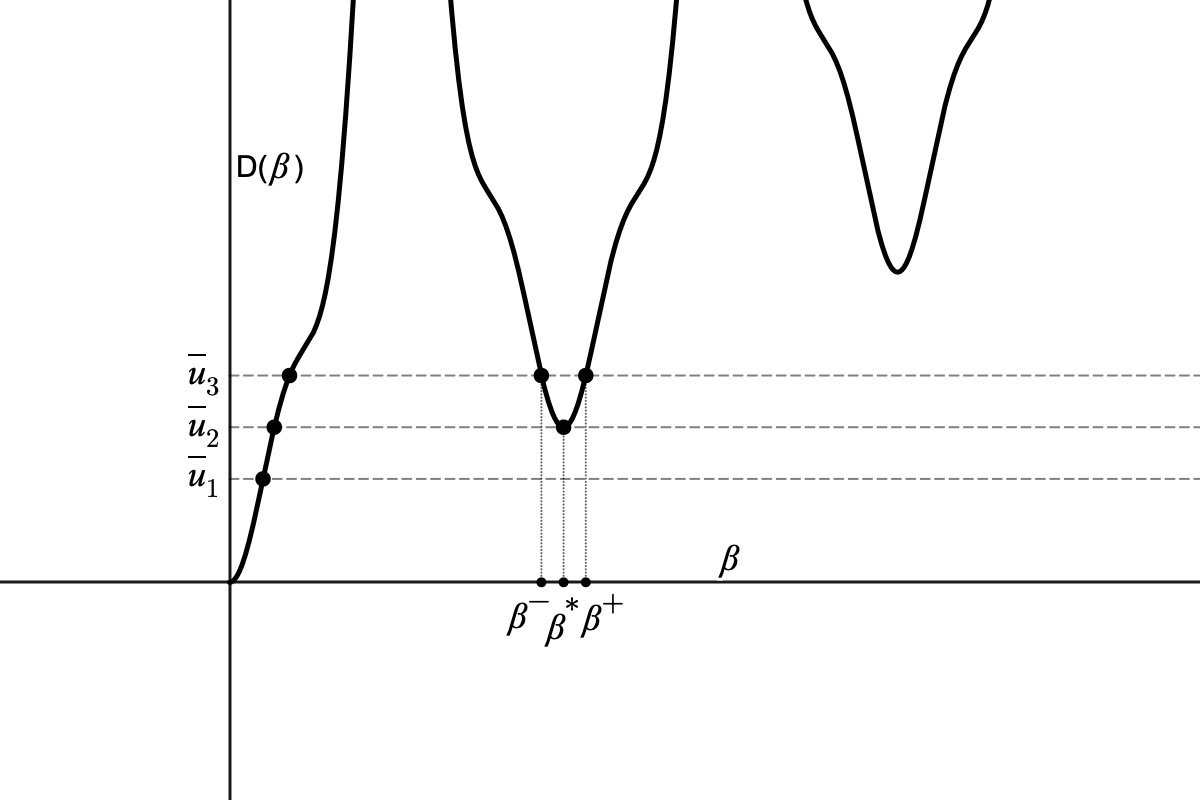}
\caption{\em 
A saddle-node bifurcation  for the equation $D(\beta)=\bar{u}/2$ takes place as $\bar u$ crosses a local minimum of $D(\beta)$, say at $\beta^*.$}
\label{fig:D&u}
\end{figure}

\subsection{Linearization and eigenvalue problems}
 Let $(u,\theta)=(u(x),\theta(x))$ be a stationary solution of \eqref{ssth} and \eqref{bc}.  We linearize \eqref{intro} around  $(u(x),\theta(x))$   by looking for first order time dependent perturbations:
\begin{align*}
    u(t,x)&=u(x)+\varepsilon \tilde{U}(t,x)+O(\varepsilon^2),\quad
    \theta(t,x)=\theta(x)+\varepsilon \tilde{\Theta}(t,x)+O(\varepsilon^2),
\end{align*}
where $\tilde{U}(t,0)=\tilde{U}(t,1)=\tilde{\Theta}(t,0)=\tilde{\Theta}(t,1)=0.$ 
Substituting $u$ and $\theta$  into \eqref{intro} and keeping the terms of order $\varepsilon$, one has
\begin{align}
\tilde{U}_t=&\big[g(\theta)\tilde{U}_x\big]_x+\big[g'(\theta)\tilde{\Theta} u_x\big]_x+\big[h(\theta)\tilde{\Theta}_t\big]_x,\label{Upert}\\
 \gamma_1\tilde{\Theta}_t=&c'(\theta)(c(\theta)\theta_x)_x\tilde{\Theta}+c(\theta)(c(\theta)\tilde{\Theta})_{xx}-h'(\theta)\tilde{\Theta} u_x-h(\theta)\tilde{U}_x. \label{phipert}
\end{align}
For eigenvalue problems, set $\tilde{U}(t,x)\equiv e^{\lambda t}U(x)$ and $\tilde{\Theta}(t,x)\equiv e^{\lambda t}\Theta(x)$ to get
\begin{align}\label{OriEvp}\begin{split}
\lambda U=&\big[g(\theta){U}_x\big]_x+\big[g'(\theta){\Theta} u_x\big]_x+\big[\lambda h(\theta){\Theta}\big]_x,\\
\lambda \gamma_1{\Theta}=&c'(\theta)(c(\theta)\theta_x)_x{\Theta}+c(\theta)(c(\theta){\Theta})_{xx}-h'(\theta){\Theta} u_x-h(\theta){U}_x.
 \end{split}
\end{align}
 Introduce
\begin{align*}
    P(x)&:=g(\theta)U_x+{g'(\theta)}u_x\Theta+\lambda{h(\theta)}\Theta\;\mbox{ and }\; Q(x):=\big(c^2(\theta)\Theta\big)_x.
\end{align*}
Then the eigenvalue problem \eqref{OriEvp} can be  written as a first order ODE system 
\begin{align}\label{evp}
Z'=\big(A(x)+\lambda B(x)\big)Z,
\end{align}
where $Z=(U,P,\Theta,Q)^T$, 
\begin{align*}
 &A(x)=
 \begin{pmatrix}
0 & \frac{1}{g(\theta)} & -\frac{g'(\theta)u_x}{g(\theta)} & 0\\
0 & 0 & 0 & 0\\
0 & 0 & -2\frac{c'(\theta)\theta_x}{c(\theta)} & \frac{1}{c^2(\theta)}\\
0 & \frac{h(\theta)}{g(\theta)} & b_1(\theta,\theta_x,u_x) & 2\frac{c'(\theta)\theta_x}{c(\theta)}
\end{pmatrix},\\
&\\
&B(x)=
 \begin{pmatrix}
0 & 0 & -\frac{h(\theta)}{g(\theta)} & 0\\
1 & 0 & 0 & 0\\
0 & 0 & 0 & 0\\
0 & 0 & b_2(\theta) & 0
\end{pmatrix},\\
&\\
& b_1(\theta,\theta_x,u_x)=\Big(\frac{h}{g}\Big)'(\theta)g(\theta)u_x+\Big({c(\theta)}c''(\theta)-3c'^2(\theta)\Big){\theta_x}^2,\\
 &b_2(\theta)=\gamma_1-\frac{h^2(\theta)}{g(\theta)}. 
 \end{align*}
We recall that $\lambda$ is an eigenvalue associated to $(u(x),\theta(x))$ if \eqref{evp} has a non-zero solution $Z(x)=(U(x),P(x),\Theta(x),Q(x))^T$ with $U(0)=U(1)=\Theta(0)=\Theta(1)=0.$\\

\subsection{Evans function and an explicit criterion for zero eigenvalue}
Let $\beta$ correspond to the stationary solution $(u(x),\theta(x))$ from Theorem \ref{propD}.   For $j=1,2,3,4$, let $Z_j(x)=Z_j(x;\lambda,\beta)$  be solutions of \eqref{evp} with
\begin{align*}
    Z_1(0)=(0,1,0,0)^T,\quad Z_2(0)=(0,0,0,1)^T;\\
    Z_3(1)=(0,1,0,0)^T,\quad Z_4(1)=(0,0,0,1)^T.
\end{align*}
Note that $Z_1(x)$ and $Z_2(x)$ satisfy the boundary condition $U(0)=\Theta(0)=0$, and $Z_3(x)$ and $Z_4(x)$ satisfy $U(1)=\Theta(1)=0$.\\

Define the Evans function as
\begin{align}
    E(x,\lambda,\beta):=\det
    (Z_1(x)\,|\,Z_2(x)\,|\,Z_3(x)\,|\,Z_4(x)).
\end{align}
Since $\text{tr}(A(x)+\lambda B(x))=0,$ by Liouville's formula, $E(x,\lambda,\beta)$ is independent of $x$.
In the following,  we write $E(\lambda,\beta)$ for $E(x,\lambda,\beta)$. The next statement is well-known (See, e.g., \cite{JDEDL}).
\begin{prop}\label{ev}
  $\lambda$ is an eigenvalue   if and only if $E(\lambda,\beta)=0$. 
\end{prop}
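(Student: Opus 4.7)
The plan is to read off $E(\lambda,\beta)$ as the obstruction to the two ``boundary-adapted'' solution subspaces having a nontrivial common element, and to invoke Liouville's formula for the $x$-independence that makes the vanishing condition intrinsic.

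First I would observe that the left boundary condition $U(0)=\Theta(0)=0$ cuts out a $2$-dimensional subspace of $\mathbb{C}^4$, of which $Z_1(0)=(0,1,0,0)^T$ and $Z_2(0)=(0,0,0,1)^T$ form a basis. By linearity and the uniqueness theorem for the linear ODE \eqref{evp}, the solutions $Z_1(x),Z_2(x)$ are linearly independent on all of $[0,1]$ and span exactly the $2$-dimensional space of solutions of \eqref{evp} whose first and third components vanish at $x=0$. Symmetrically, $Z_3(x),Z_4(x)$ span the $2$-dimensional space of solutions satisfying $U(1)=\Theta(1)=0$.

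By definition, $\lambda$ is an eigenvalue precisely when some nontrivial solution lies in both of these subspaces. This is equivalent to the existence of scalars $(a_1,a_2,a_3,a_4)\neq 0$ with
\[
a_1 Z_1(x)+a_2 Z_2(x)=a_3 Z_3(x)+a_4 Z_4(x)\quad\text{for all } x\in[0,1].
\]
The forward direction writes the eigenfunction in each of the two bases; the converse uses the fact that neither $(a_1,a_2)$ nor $(a_3,a_4)$ can vanish alone (else the linear independence of $\{Z_1,Z_2\}$ or of $\{Z_3,Z_4\}$ would force all coefficients to be zero), so the common vector is a bona fide nonzero eigenfunction. Since these are four vectors in a $4$-dimensional space, such a dependency is equivalent to $\det\!\left(Z_1(x)\,|\,Z_2(x)\,|\,Z_3(x)\,|\,Z_4(x)\right)=0$ at some $x\in[0,1]$.

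Finally, the step ``at some $x$ is the same as at every $x$'' is exactly what Liouville's formula gives. A direct inspection of the explicit matrices in \eqref{evp} shows $\mathrm{tr}(A(x)+\lambda B(x))=0$: the only nontrivial diagonal entries of $A$ are $\pm 2c'(\theta)\theta_x/c(\theta)$, which cancel, while $B$ is off-diagonal. Hence $\frac{d}{dx}E(x,\lambda,\beta)=\mathrm{tr}(A(x)+\lambda B(x))\,E(x,\lambda,\beta)\equiv 0$, so the determinant is constant in $x$ and the notation $E(\lambda,\beta)$ is justified. There is no real obstacle in this proposition; the only care needed is the dimension counting of the two boundary-adapted solution spaces and the trace check from the explicit form of $A$ and $B$.
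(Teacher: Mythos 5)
Your argument is correct and is exactly the standard Evans-function dimension count; the paper does not prove this proposition but merely cites it as well-known, and your proof supplies the expected details (the two boundary-adapted $2$-dimensional solution subspaces, their nontrivial intersection being equivalent to linear dependence of $Z_1,\dots,Z_4$ at any single $x$, and Liouville's formula with $\mathrm{tr}(A+\lambda B)=0$ making $E$ independent of $x$).
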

Note that at a saddle-node bifurcation moment of stationary solutions, $\lambda=0$ is expected to be an eigenvalue. 
The goal is to evaluate Evans function at $\lambda=0$, which amounts to {a sufficiently good understanding of the linear system (\ref{evp}).} The observation is that the nonlinear problem for stationary solutions can be solved very explicitly and the derivative of the flow map/solution operator with respect to initial conditions solves the linearized system. As a result, we are able to relate Evans function at $\lambda=0$  directly to $D'(\beta)$. In fact,  
{
\begin{thm}\label{e=d'} One has 
\begin{align}\label{ed'}
    E(0,\beta)=-\frac{2\beta}{p_0c^2(\theta_0)}D'(\beta).
\end{align}
Consequently, 
   $\lambda=0$ is an eigenvalue if and only if $D'(\beta)=0.$ 
\end{thm}
 The formula (\ref{ed'}) is the main analytical contribution of this work and will be established through several steps in the remaining part of this subsection.

{First of all, we will briefly explain our idea  to relate $E(0;\beta)$ with $D'(\beta)$. Note that, for $\lambda=0$, the system  (3.4) is nothing but the {\em linearized} system of the stationary system along the stationary solution. On the other hand, it is known that the fundamental matrix solution $\Phi(x)$ of the linearized system is the Jacobian matrix $\frac{D}{Dz}\phi^x(z)$  of the solution operator $\phi^x$ of the stationary solution with respect to initial conditions $z$ (see formula (3.14)). Lastly, Theorem 2.3 and its proof relate the solution of BVP of $\phi^x$  directly with the mapping $D(\beta)$. These are the main reasons for a relation like in Theorem 3.2 between $E(0,\lambda)$ and $D'(\beta)$. Of course, it is still quite involved in carrying out this idea. We will split the proof of Theorem 3.2 into four steps in the rest of this subsection.}
 
{\textbf{Step 1:}} We will begin with some general preparations.}
For $\lambda=0$,  system \eqref{evp} becomes
\begin{align}\begin{split}\label{sys0}
    U_x&=\frac{1}{g(\theta)}P-\frac{g'(\theta)}{g(\theta)}u_x\Theta;\\
    P_x&=0;\\
    \Theta_x&=-2\frac{c'(\theta)\theta_x}{c(\theta_x)}\Theta+\frac{1}{c^2(\theta)}Q;\\
    Q_x&=\frac{h(\theta)}{g(\theta)}P+[c''(\theta)c(\theta){\theta_x}^2-3c'^2(\theta){\theta_x}^2+(\frac{h}{g})'(\theta)p]\Theta+2\frac{c'(\theta)}{c(\theta)}\theta_xQ.
    \end{split}
\end{align}
This is the linearization of the first order nonlinear stationary system recast below
\begin{align}
\begin{split}\label{nlss}
    u_x=&\frac{p}{g},\quad 
    p_x=0;\\
    \theta_x=&\frac{1}{c^2(\theta)}\eta,\quad
    \eta_x=\frac{c'(\theta)}{c^3(\theta)}{\eta}^2+\frac{h}{g}(\theta)p,
    \end{split}
\end{align}
where \begin{align*}
    p(x)&:=g(\theta)u_x;\quad \eta(x):=c^2(\theta)\theta_x.
\end{align*}

{\textbf{Step 2:}} The next result can be verified directly.
 \begin{prop}\label{int4nonl} Let {$z(x)=(u(x),p(x),\theta(x),\eta(x))$} be the solution of system \eqref{nlss} with the initial condition $z(0)=(u_0,p_0,\theta_0,\eta_0)$. Then one has the following conservative quantities:
\begin{align*}
        H_1(p)=&p=p_0;\\
        H_2(p,\theta,\eta)=&\frac{1}{2c^2}\eta^2-pG(\theta)=\frac{1}{2c^2(\theta_0)}\eta_0^2=-p_0G(\tilde\theta); \; (p_0>0\; \mbox{ since }\; G(\tilde\theta)<0)\\
        H_3(u,p,\theta)=&u-\sqrt{\frac{p}{2}}\bigg[\chi_{[x\leq 1/2]}S(\theta,\theta_0;\tilde\theta)+\chi_{[x>1/2]}\big(S(\tilde\theta,\theta_0;\tilde\theta)+S(\tilde\theta,\theta;\tilde\theta)\big)\bigg]=u_0
\end{align*}
where
\begin{align}\nonumber
S(X,Y;\tilde\theta)&=\int_{X}^{Y}\frac{c(z)}{g(z)\sqrt{G(z)-G(\tilde\theta)}}\,dz.
\end{align}
\end{prop}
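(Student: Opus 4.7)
The plan is to verify each of the three conservation laws by direct computation along solutions of \eqref{nlss}; the proposition is really a restatement that the $(\theta,\eta)$-subsystem is Hamiltonian with Hamiltonian $H_2$, the $p$-equation is trivial, and $u$ is recovered by quadrature.

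The identity $H_1 \equiv p_0$ is immediate from $p_x = 0$. For $H_2$, I would differentiate
\[
H_2(p,\theta,\eta) = \frac{\eta^2}{2c^2(\theta)} - p\,G(\theta)
\]
along a solution. Using $\theta_x = \eta/c^2(\theta)$, the chain rule applied to $c^{-2}$ contributes a term proportional to $-c'(\theta)\eta^3/c^5(\theta)$, and substituting $\eta_x = c'(\theta)\eta^2/c^3(\theta) + h(\theta)p/g(\theta)$ into $\eta\eta_x/c^2(\theta)$ produces an equal-and-opposite $+c'(\theta)\eta^3/c^5(\theta)$ together with a residual $h(\theta)p\eta/(g(\theta)c^2(\theta))$. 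Since $G'(\theta) = h(\theta)/g(\theta)$, the term $-pG'(\theta)\theta_x$ exactly cancels this residual, giving $dH_2/dx \equiv 0$. Evaluating at $x=0$ uses $G(\theta_0) = 0$ to yield $\eta_0^2/(2c^2(\theta_0))$, and at the turning point $x=1/2$ the vanishing of $\eta$ gives $-p_0 G(\tilde\theta)$, producing all three equalities claimed for $H_2$.

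For $H_3$, I would exploit $H_2$ to separate variables in the $(\theta,\eta)$-subsystem. Solving
\[
\frac{\eta^2}{2c^2(\theta)} = p_0\bigl(G(\theta) - G(\tilde\theta)\bigr)
\]
for $\theta_x = \eta/c^2(\theta)$ yields $c(\theta)\theta_x = \pm\sqrt{2p_0}\sqrt{G(\theta) - G(\tilde\theta)}$. Substituting into $du = (p_0/g(\theta))\,dx$ via the change of variables $dx = d\theta/\theta_x$ gives
\[
du = \pm\sqrt{\tfrac{p_0}{2}}\,\frac{c(\theta)}{g(\theta)\sqrt{G(\theta) - G(\tilde\theta)}}\,d\theta.
\]
The stationary orbit satisfies $\theta(0) = \theta(1) = \theta_0$ with turning point $\theta(1/2) = \tilde\theta$, so $\theta(x)$ is monotone on each of $[0,1/2]$ and $[1/2,1]$ with opposite signs of $\theta_x$. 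Integrating with the minus sign on $[0,1/2]$ and the plus sign on $[1/2,1]$, and matching at $x=1/2$, reproduces exactly the piecewise formula for $H_3$ in which the two indicator functions encode the two signs of $\theta_x$.

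The only real subtlety is the sign bookkeeping for $\theta_x$ across the turning point; once that is handled, the rest is a straightforward differentiation-and-substitution check using the definitions of $G$, $h$, and $g$.
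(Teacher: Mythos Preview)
Your proposal is correct and matches the paper's approach: the paper simply states that the result ``can be verified directly'' without spelling out the computation, and your direct differentiation of $H_2$ together with the separation-of-variables quadrature for $H_3$ is exactly that verification. The sign bookkeeping across the turning point that you flag is indeed the only point requiring care, and you have handled it correctly.
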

Note that the conservation of $H_2$ gives an expression of the dependence between $\tilde\theta$ and $p_0,\eta_0,\theta_0,$ namely,
\begin{align}\label{tilderelation}
    \frac{1}{2c^2(\theta_0)}\eta_0^2=-p_0G(\tilde\theta(p_0,\theta_0,\eta_0)).
\end{align}

{\textbf{Step 3:}} Given a condition $z(0)=z_0:=(u_0,p_0,\theta_0,\eta_0)$ at $x=0$, we use the quantities $H_1,H_2,$ and $H_3$  to solve the non-linear system implicitly 
\begin{align}\begin{split}\label{nls}
    u(x)=\displaystyle u_0+\sqrt{\frac{p_0}{2}}\bigg[\chi_{[x\leq1/2]}S(\theta,\theta_0;\tilde\theta)&+\chi_{[x>1/2]}\big(S(\tilde\theta,\theta_0;\tilde\theta)+S(\tilde\theta,\theta;\tilde\theta)\big)\bigg],\\
    p&=p_0,\\
\chi_{[x>1/2]}\big(S^g(\tilde\theta,\theta_0;\tilde\theta)+S^g(\tilde\theta,\theta;\tilde\theta)\big)&+\chi_{[x\leq1/2]}S^g(\theta,\theta_0;\tilde\theta)
=x\sqrt{2p_0},\\
\eta(x)=\displaystyle c(\theta(x))&\sqrt{\frac{\eta^2_0}{c^2(\theta_0)}+2p_0G(\theta(x))},
\end{split}
\end{align}
where
\begin{align}\label{SSg}\begin{split}
S(X,Y;\tilde\theta)=&\int_{X}^{Y}\frac{c(z)}{g(z)\sqrt{G(z)-G(\tilde\theta)}}\,dz,\\
S^g(X,Y;\tilde\theta)=&\int_{X}^{Y}\frac{c(z)}{\sqrt{G(z)-G(\tilde\theta)}}\,dz.
\end{split}
\end{align}
{ 
It follows from  \eqref{tilderelation} that
    \begin{align}\label{dpeta}\begin{split}
        \frac{h(\tilde{\theta})}{g(\tilde{\theta})}\frac{\partial\tilde{\theta}}{\partial p_0}=&\frac{-G(\tilde{\theta}(p_0,\theta_0,\eta_0)}{p_0}:=\frac{\beta}{p_0},\\
        \frac{h(\tilde{\theta})}{g(\tilde{\theta})}\frac{\partial\tilde{\theta}}{\partial \eta_0}=&-\frac{\eta_0}{p_0c^2(\theta_0)}=\frac{\sqrt{-G(\tilde{\theta})}}{c(\theta_0)}\sqrt{\frac{2}{p_0}}:=\frac{\sqrt{\beta}}{c(\theta_0)}\sqrt{\frac{2}{p_0}}.
        \end{split}
        \end{align}
 Similarly, from \eqref{SSg}, we have
        \begin{align}\label{dtheta}\begin{split}
        \frac{\partial S(\tilde\theta,\theta_0;\tilde\theta)}{\partial \tilde\theta}=&\frac{h(\tilde{\theta})}{g(\tilde{\theta})}\int_{\tilde{\theta}}^{\theta_0} (\frac{c}{h})'(z)\frac{1}{\sqrt{G(z)-G(\tilde{\theta})}}\,dz\\
&        -\frac{c(\theta_0)}{h(\theta_0)}\frac{h(\tilde{\theta})}{g(\tilde{\theta})}\frac{1}{\sqrt{G(\theta_0)-G(\tilde{\theta})}},\\
        \frac{\partial S^g(\tilde\theta,\theta_0;\tilde\theta)}{\partial \tilde\theta}=&\frac{h(\tilde{\theta})}{g(\tilde{\theta})}\int_{\tilde{\theta}}^{\theta_0} (\frac{cg}{h})'(z)\frac{1}{\sqrt{G(z)-G(\tilde{\theta})}}\,dz\\
       & -\frac{c(\theta_0)g(\theta_0)}{h(\theta_0)}\frac{h(\tilde{\theta})}{g(\tilde{\theta})}\frac{1}{\sqrt{G(\theta_0)-G(\tilde{\theta})}}.
        \end{split}
    \end{align}
}
Denote the solution operator of the stationary system \eqref{nlss} by $\phi^x.$ We have 
\[\phi^x(z_0)=z(x;z_0):=(u(x),p(x),\theta(x),\eta(x)).\] 
It follows from \eqref{tilderelation}  and \eqref{nls}, etc., that the solution operator (matrix) of the corresponding linear system (\ref{sys0}) is
\begin{align}
  \Phi(x):=\frac{D}{Dz_0}\phi^x(z_0)= \begin{pmatrix}
      1 & T_{12} & T_{13} & T_{14}\\
      0 & 1 & 0 & 0\\
      0 & T_{32} & T_{33} & T_{34}\\
      0 & T_{42} & T_{43} & T_{44}
  \end{pmatrix}
\end{align}
where
\begin{align}\label{TT}\begin{split}
    T_{12}:=\frac{\partial u}{\partial p_0} \quad T_{13}:=\frac{\partial u}{\partial \theta_0} \quad T_{14}:=\frac{\partial u}{\partial \eta_0},\\
    T_{32}:=\frac{\partial \theta}{\partial p_0} \quad T_{33}:=\frac{\partial \theta}{\partial \theta_0} \quad T_{34}:=\frac{\partial \theta}{\partial \eta_0},\\
    T_{42}:=\frac{\partial \eta}{\partial p_0} \quad T_{43}:=\frac{\partial \eta}{\partial \theta_0} \quad T_{44}:=\frac{\partial \eta}{\partial \eta_0}.
    \end{split}
\end{align}
Hence, the solution $Z(x):=(U(x),P(x),\Theta(x),Q(x))^T$ to the linearized system can be written as
\begin{align}
    Z(x)=\Phi(x)\,Z_0=\begin{pmatrix}
        U_0+P_0T_{12}+\Theta_0T_{13}+Q_0T_{14}\\
        P_0\\
        P_0T_{32}+\Theta_0T_{33}+Q_0T_{34}\\
        P_0T_{42}+\Theta_0T_{43}+Q_0T_{44} 
    \end{pmatrix}.
\end{align}
With the boundary condition $Z_1(0)=(0,1,0,0)^T$,
we write the solution
\begin{align}
    \displaystyle Z_1(x)=(
        T_{12},\,1,\,T_{32},\,T_{42}
    )^T.
\end{align}
Similarly, with $Z_2(0)=(0,0,0,1)^T$,
we have
\begin{align}
\displaystyle Z_2(x)=(
    T_{14},\,0,\,T_{34},\,T_{44})^T.
\end{align}
Following the same arguments as above we can write the solutions $Z_3$ and $Z_4$ with the prescribed boundary conditions $Z_3(1)=(0,1,0,0)$ and $Z_4(1)=(0,0,0,1).$\\

{\textbf{Step 4:}} Direct calculations using values $Z_k(1)$ for $E(0,\lambda)$ then give
\begin{align}\label{evans0}\begin{split}
    E(0,\beta)=&\det(Z_1(1)\,|\,Z_2(1)\,|\,Z_3(1)\,|\,Z_4(1))=\det(\Phi(1)Z_1(0)|\Phi(1)Z_2(0)|e_2|e_4)\\
    =&\det\begin{pmatrix}
        T_{12}(1) & T_{14}(1) & 0 & 0 \\
        1 & 0 & 1 & 0\\
        T_{32}(1) & T_{34}(1) & 0 & 0\\
        T_{42}(1) & T_{44}(1) & 0 & 1
    \end{pmatrix}=-T_{12}(1)T_{34}(1)+T_{14}(1)T_{32}(1).
    \end{split}
\end{align}
Now, using the identities in \eqref{dpeta} and \eqref{dtheta}, we evaluate the terms above by differentiating the first and third equations in \eqref{nls} to obtain
\begin{align}
\begin{split}\label{Ts}
    \frac{\partial \theta}{\partial p_0}\big|_{x=1}&=\frac{\sqrt{\beta}}{c(\theta_0)\sqrt{2p_0}}+2\frac{g(\theta_0)}{h(\theta_0)p_0}\beta-\frac{2}{c(\theta_0)p_0}\beta^{3/2}M^g,\\
    \frac{\partial u}{\partial \eta_0}\big|_{x=1}&= \frac{2}{c(\theta_0)}\sqrt{\beta}M-\frac{2}{cg(\theta_0)}\sqrt{\beta}M^g,\\
    \frac{\partial \theta}{\partial \eta_0}\big|_{x=1}&= 2\sqrt{\frac{2}{p_0}}\frac{g(\theta_0)}{c(\theta_0)h(\theta_0)}\sqrt{\beta}-2\sqrt{\frac{2}{p_0}}\frac{1}{c^2(\theta_0)}\beta M^g,
    \end{split}
\end{align}
\vspace{-1em}
    \begin{align*}
      \frac{\partial u}{\partial p_0}\big|_{x=1}&=\frac{1}{2g(\theta_0)}+\frac{1}{\sqrt{2p_0}}S(\tilde{\theta},\theta_0)+\sqrt{\frac{2}{p_0}}\beta M-\sqrt{\frac{2}{p_0}}\frac{1}{g(\theta_0)}\beta M^g.
    \end{align*}
Recall  \eqref{TT} and substitute \eqref{Ts} into \eqref{evans0} to  get
\begin{align}\label{evans1}
    E(0,\beta)=\frac{-2\beta}{p_0c^2(\theta_0)}\bigg[\Big(\frac{c(\theta_0)g(\theta_0)}{\sqrt{\beta}h(\theta_0)}-M^g\Big)S(\tilde{\theta},\theta_0)+\sqrt{\frac{p_0}{2}}\frac{c(\theta_0)}{\sqrt{\beta}h(\theta_0)}-\sqrt{\frac{p_0}{2}}M\bigg]
\end{align}
where
\begin{align*}
    M^g=\int_{\tilde{\theta}}^{\theta_0}(\frac{cg}{h})'(z)\frac{1}{\sqrt{G(z)-G(\tilde{\theta})}}\,dz,\quad
    M=\int_{\tilde{\theta}}^{\theta_0}(\frac{c}{h})'(z)\frac{1}{\sqrt{G(z)-G(\tilde{\theta})}}\,dz,
\end{align*}
and recalling that
\begin{align*}
    \sqrt{\frac{p_0}{2}}&=\int_{\tilde{\theta}}^{\theta_0}\frac{c(z)}{\sqrt{G(z)-G(\tilde{\theta})}}\,dz,\quad
S(\tilde{\theta},\theta_0):=\int_{\tilde{\theta}}^{\theta_0}\frac{c(z)}{g(z)\sqrt{G(z)-G(\tilde{\theta})}}\,dz.
\end{align*}
Using the change of variables that was introduced for $\bar u$ in \eqref{ubar}, we can write
\begin{align}
    \begin{split}
      M^g&=\int_{0}^{\beta}(\frac{cg}{h})'(F(t-\beta))\frac{F'(t-\beta)}{\sqrt{t}}\,dt,\\
    M&=\int_{0}^{\beta}(\frac{c}{h})'(F(t-\beta))\frac{F'(t-\beta)}{\sqrt{t}}\,dt,\\
    \sqrt{\frac{p_0}{2}}&=\int_{0}^{\beta}\frac{cg}{h}(F(t-\beta))\frac{1}{\sqrt{t}}\,dt,\quad
S(\tilde{\theta},\theta_0)=\int_0^\beta\frac{c}{h}(F(t-\beta))\frac{1}{\sqrt{t}}\,dt.
    \end{split}
\end{align}
Now, we compute $D'(\beta)$ directly using the expression given by \eqref{ubar}:
\begin{align*}
    D(\beta)=\int_0^\beta\frac{cg}{h}(F(t-\beta))\frac{1}{\sqrt{t}}\,dt\int_0^\beta\frac{c}{h}(F(t-\beta))\frac{1}{\sqrt{t}}\,dt.
\end{align*}
We obtain
\begin{align*}
    D'(\beta)=&\frac{cg}{h}(\theta_0)\frac{1}{\sqrt{\beta}}\int_0^\beta\frac{c}{h}(F(t-\beta))\frac{1}{\sqrt{t}}\,dt\\
    &-\int_0^\beta(\frac{cg}{h})'\frac{F'(t-\beta)}{\sqrt{t}}\,dt\int_0^\beta\frac{c}{h}(F(t-\beta))\frac{1}{\sqrt{t}}\,dt\\
    &+\frac{c(\theta_0)}{h(\theta_0)}\frac{1}{\sqrt{\beta}}\int_0^\beta\frac{cg}{h}(F(t-\beta))\frac{1}{\sqrt{t}}\,dt\\
    &-\int_0^\beta\frac{cg}{h}(F(t-\beta))\frac{1}{\sqrt{t}}\,dt\int_0^\beta(\frac{c}{h})'\frac{F'(t-\beta)}{\sqrt{t}}\,dt.
\end{align*}
This expression, up to a factor, matches exactly the expression of $E$ in \eqref{evans1} that gives (\ref{ed'}). 
\qed

A natural question rises immediately: What is the stability/instability of the bifurcating solutions? In the next section we provide a partial answer to this question.

\section{Spectral stability/instability of stationary solutions}\label{SpecStab}
\setcounter{equation}{0}
The goal of this section is to find a criterion for instability of the bifurcating stationary solutions. 
The main result of this section, which will be stated at the end, follows from the explicit computations. 

\subsection{A motivation for the bifurcation of the zero eigenvalue}
First, we recall the setup.  For any solution $\beta>0$ of $\bar u=2D(\beta)$, there is a unique stationary solution (Theorem \ref{propD}). The eigenvalue problem associated to the linearization around the stationary solution yields the following first order ODE:
\begin{align}\label{odee}
    Z'=(A(x,\beta)+\lambda B(x,\beta))Z
\end{align}
and we constructed the Evans function as follows,
\begin{align*}
    E(\lambda,\beta)=\det (Z_1|Z_2|Z_3|Z_4)
\end{align*}
where the $Z_i(x;\lambda,\beta)$ are independent solutions \eqref{odee} corresponding to different boundary conditions at $x=0$ and $x=1.$ We are interested in those solutions but only with $\lambda=0$ as an eigenvalue of the linear system. To motivate what we do next, we consider the curve $(\lambda(\beta),\beta)$ over which we  have $\lambda(\beta^*)=0$ and 
\begin{align*}
    E(\lambda(\beta),\beta)=0.
\end{align*}
Differentiate with respect to $\beta$ to get,
\begin{align*}
    E_\beta(\lambda(\beta),\beta)+E_\lambda(\lambda(\beta),\beta)\lambda'(\beta)=0,
\end{align*}
solving for $\lambda'$ at $\beta^*,$
\begin{align}\label{bifur0}
    \lambda'(\beta^*)=-\frac{E_\beta(\lambda(\beta^*),\beta^*)}{E_\lambda(\lambda(\beta^*),\beta^*)}.
\end{align}
Recall from \eqref{ed'} that
\begin{align*}
    E(0,\beta)=\frac{-2\beta}{p_0c^2(\theta_0)}D'(\beta).
\end{align*}
Differentiate the above with respect to $\beta$ to get
\begin{align*}
    E_\beta(0,\beta)=\frac{-2}{p_0c^2(\theta_0)}D'(\beta)-\frac{2\beta}{p_0c^2(\theta_0)}D''(\beta).
\end{align*}
At the bifurcation moment $\beta=\beta^*$ where $E(0,\beta^*)=0$, we have $D'(\beta^*)=0.$ Hence,
\begin{align}\label{Ebeta}
    E_\beta(0,\beta^*)=-\frac{2\beta}{p_0c^2(\theta_0)}D''(\beta^*).
\end{align}
If  $D''(\beta^*)\neq 0$ and $E_\lambda(0,\beta^*)\neq 0$, then the $\lambda(\beta)$ passes through $0$ transversally as $\beta$ passes through $\beta^*$. 
Therefore, it remains to examine $E_\lambda(\lambda(\beta),\beta)$.

\subsection{Computation of $E_\lambda(0,\beta^*)$}
First, we directly differentiate $E(\lambda,\beta)$ with respect to $\lambda,$
\begin{align*}
E_\lambda=&\det(Z_{1,\lambda}|Z_2|Z_3|Z_4)+\det(Z_1|Z_{2,\lambda}|Z_3|Z_4)\\
&+\det(Z_1|Z_2|Z_{3,\lambda}|Z_4)+\det(Z_1|Z_2|Z_{3}|Z_{4,\lambda}).
\end{align*}
To determine $Z_{i,\lambda}$, we differentiate the ODE \eqref{odee} for $Z_i$  with respect to $\lambda$ to get
\begin{align*}
    Z'_{i,\lambda}(x;\lambda)=(A(x,\beta)+\lambda B(x,\beta))Z_{i,\lambda}(x;\lambda)+B(x,\beta) Z_i(x;\lambda).
\end{align*}
Denote $W_i(x;\lambda)\equiv Z_{i,\lambda}(x;\lambda).$ The equation to be solved is
\begin{align*}
    W_i'(x;\lambda)=(A(x,\beta)+\lambda B(x,\beta))W_i+B(x;\beta) Z_i(x;\lambda).
\end{align*}
By the variation of parameters method:
\begin{align*}
    W_i(x;\lambda)\equiv Z_{i,\lambda}(x;\lambda)=\Phi(x;\lambda)\int_0^x\Phi^{-1}(s;\lambda)B(s;\lambda) Z_i(s;\lambda)\,ds.
\end{align*}
Recall,
\begin{align}
  \Phi(x;\lambda):=D\phi^x(z_0)= \begin{pmatrix}
      1 & T_{12} & T_{13} & T_{14}\\
      0 & 1 & 0 & 0\\
      0 & T_{32} & T_{33} & T_{34}\\
      0 & T_{42} & T_{43} & T_{44}
  \end{pmatrix}
\end{align}
where
\begin{align}\begin{split}
    T_{12}:=\frac{\partial u}{\partial p_0} \quad T_{13}:=\frac{\partial u}{\partial \theta_0} \quad T_{14}:=\frac{\partial u}{\partial \eta_0},\\
    T_{32}:=\frac{\partial \theta}{\partial p_0} \quad T_{33}:=\frac{\partial \theta}{\partial \theta_0} \quad T_{34}:=\frac{\partial \theta}{\partial \eta_0},\\
    T_{42}:=\frac{\partial \eta}{\partial p_0} \quad T_{43}:=\frac{\partial \eta}{\partial \theta_0} \quad T_{44}:=\frac{\partial \eta}{\partial \eta_0}.
    \end{split}
\end{align}
Since $\mathrm{tr}(A(x)+\lambda B(x))=0$, $\mathrm{det}\Phi(x;\lambda)=T_{33}T_{44}-T_{34}T_{43}=1$. Therefore,

\begin{align*}
        \Phi^{-1}=\begin{pmatrix}
            1 & -J& T_{14}T_{43}-T_{13}T_{44} & T_{13}T_{34}-T_{14}T_{33}\\
            0 & 1 & 0 & 0\\
            0 & T_{34}T_{42}-T_{32}T_{44} & T_{44} & -T_{34}\\
            0 & T_{32}T_{43}-T_{33}T_{42} & -T_{43} & T_{33}
        \end{pmatrix}
\end{align*}
where 
\begin{align*}
    J:=\det\begin{pmatrix}
        T_{12} & T_{13} & T_{14}\\
        T_{32} & T_{33} & T_{34}\\
        T_{42} & T_{43} & T_{44}
    \end{pmatrix}.
\end{align*}
Recall,
\begin{align*}
    B(x,\beta^*)=\begin{pmatrix}
0 & 0 & -\frac{h(\theta)}{g(\theta)} & 0\\
1 & 0 & 0 & 0\\
0 & 0 & 0 & 0\\
0 & 0 & \gamma_1-\frac{h^2(\theta)}{g(\theta)} & 0
\end{pmatrix}.
\end{align*}
Thus, $K:=\Phi^{-1}(x)B(x,\beta^*)$ is given by
\begin{align*}
   K= \begin{pmatrix}
        -J & 0 & -\frac{h}{g}+(\gamma_1-\frac{h^2}{g})(T_{13}T_{34}-T_{14}T_{33}) & 0\\
        1 & 0 & 0 & 0\\
        T_{34}T_{42}-T_{32}T_{44} & 0 & -(\gamma_1-\frac{h^2}{g})T_{34} & 0\\
        T_{32}T_{43}-T_{33}T_{42} & 0 & (\gamma_1-\frac{h^2}{g})T_{33} & 0
    \end{pmatrix}
\end{align*}
Altogether, at $x=1$ and for $\lambda=0$,
\begin{align*}\displaystyle
    Z_{1,\lambda}(1;0)=\Phi(1)
    \bigintss_0^1\begin{pmatrix}
        -T_{12}J+T_{32}K_{13}\\
        T_{12}\\
        T_{12}K_{31}+T_{32}K_{33}\\
        T_{12}K_{41}+T_{32}K_{43}
    \end{pmatrix}(s)ds
\end{align*}
\begin{align*}\displaystyle
    Z_{2,\lambda}(1;0)=\Phi(1)\bigintss_0^1\begin{pmatrix}
        -T_{14}J+T_{34}K_{13}\\
        T_{14}\\
        T_{14}K_{31}+T_{34}K_{33}\\
        T_{14}K_{41}+T_{34}K_{43}
    \end{pmatrix}(s)ds
\end{align*}
Recall and observe that at $x=1, \lambda=0$,
\begin{align*}
E_{\lambda}(1,0,\beta)
=&\det(Z_{1,\lambda}|Z_2|e_2|e_4)+\det(Z_1|Z_{2,\lambda}|e_2|e_4)\\
&+\det(Z_1|Z_2|{\vec 0}|Z_4)
+\det(Z_1|Z_2|Z_{3}|{\vec 0}).
\end{align*}
Direct calculations yield,
\begin{align}\label{E_l}
    E_\lambda=\underbrace{-(Z_{1,\lambda})^1T_{34}(1)+(Z_{1,\lambda})^3T_{14}(1)}_{(I)}\,\,\,\underbrace{-(Z_{2,\lambda})^3T_{12}(1)+(Z_{2,\lambda})^1T_{32}(1)}_{(II)}.
\end{align}
where $(Z_{i,\lambda})^{j}$ is the $jth$ component of the vector $Z_{i,\lambda}.$ Calculating the terms in \eqref{E_l},
\begin{itemize}
    \item The term $(I)$ in \eqref{E_l}:
\begin{align*}
    (Z_{1,\lambda})^1=&\int_0^1-T_{12}(s)J(s)+T_{32}(s)K_{13}(s)+T_{12}(1)T_{12}(s)\\
    &+{T_{13}(1)\big[T_{12}(s)K_{31}+T_{32}(s)K_{33}\big]\,}\\
    &+T_{14}(1)\big[T_{12}(s)K_{41}+T_{32}(s)K_{43}\big]ds\\
    =&\int_0^1-T_{12}(s)J(s)+T_{32}(s)K_{13}(s)+T_{12}(1)T_{12}(s)\\
    &+T_{14}(1)\big[T_{12}(s)K_{41}+T_{32}(s)K_{43}\big]ds\\
\end{align*}
Note: The second equality follows from $T_{13}(1)=0.$ Recall, $T_{13}(x):=\frac{\partial}{\partial \theta_0}u(x;u_0,p_0,\theta_0,\eta_0).$ So, at $x=1,$ we have $T_{13}(1)=\frac{\partial u_0}{\partial \theta_0}=0$ since $u_0$ and $\theta_0$ are the boundary conditions for the original unknowns.
\begin{align*}
    (Z_{1,\lambda})^3=&\int_0^1T_{32}(1)T_{12}(s)+{{T_{33}(1)}\,}\big[T_{12}(s)K_{31}+T_{32}(s)K_{33}\big]\\
    &+T_{34}(1)\big[T_{12}(s)K_{41}+T_{32}(s)K_{43}\big]ds\\
    =&\int_0^1T_{32}(1)T_{12}(s)+T_{12}(s)K_{31}+T_{32}(s)K_{33}\\
    &+T_{34}(1)\big[T_{12}(s)K_{41}+T_{32}(s)K_{43}\big]ds.
\end{align*}
Note that the second equality follows from 
   $T_{33}(1):=\frac{\partial\theta(1;\theta_0,p_0,\eta_0)}{\partial\theta_0}=\frac{\partial\theta_0}{\partial\theta_0}=1.$

Substituting into (I), one has
\begin{align}\nonumber
    I:=&-(Z_{1,\lambda})^1T_{34}(1)+(Z_{1,\lambda})^3T_{14}(1)\\\nonumber
    =&\int_0^1
    {-T_{34}(1)}\bigg(-T_{12}(s)J(s)+T_{32}(s)K_{13}(s)+T_{12}(1)T_{12}(s)\bigg)\\\nonumber
    &-T_{34}(1)T_{14}(1)\big[T_{12}(s)K_{41}+T_{32}(s)K_{43}\big]ds\\\nonumber
    &+T_{14}(1)\bigg(T_{32}(1)T_{12}(s)+T_{12}(s)K_{31}+T_{32}(s)K_{33}\bigg)\\\nonumber
    &+T_{14}(1)T_{34}(1)\big[T_{12}(s)K_{41}+T_{32}(s)K_{43}\big]\,ds\\\nonumber
    =&\int_0^1
    {-T_{34}(1)}\bigg(-T_{12}(s)J(s)+T_{32}(s)K_{13}\bigg)+{E(0,\beta^*)}T_{12}(s)\\\nonumber
    &+T_{14}(1)\bigg(T_{12}(s)K_{31}+T_{32}(s)K_{33}\bigg)ds\\\label{I}
    =&\int_0^1
    {-T_{34}(1)}\bigg(-T_{12}(s)J(s)+T_{32}(s)K_{13}\bigg)\\
    &+T_{14}(1)\bigg(T_{12}(s)K_{31}+T_{32}(s)K_{33}\bigg)ds.\nonumber
\end{align}
In the above, we have used  $E(0,\beta^*)=0$ since $\lambda(\beta^*)=0$ is an eigenvalue. 
\item The Term $(II)$ in \eqref{E_l}:
Similarly,
\begin{align*}
    (Z_{2,\lambda})^1=&\int_0^1-T_{14}(s)J(s)+T_{34}(s)K_{13}(s)+T_{12}(1)T_{14}(s)\\
    &\qquad +{{T_{13}(1)\big[T_{14}(s)K_{31}+T_{34}(s)K_{33}\big]\,}}\\
    &\qquad +T_{14}(1)\big[T_{14}(s)K_{41}+T_{34}(s)K_{43}\big]ds\\
    =&\int_0^1-T_{14}(s)J(s)+T_{34}(s)K_{13}(s)+T_{12}(1)T_{14}(s)\\
    &\qquad +T_{14}(1)\big[T_{14}(s)K_{41}+T_{34}(s)K_{43}\big]ds.
    \end{align*}
    \begin{align*}
    (Z_{2,\lambda})^3=&\int_0^1T_{32}(1)T_{14}(s)+{T_{33}(1)}\big[T_{14}(s)K_{31}+T_{34}(s)K_{33}\big]\\
    &\qquad +T_{34}(1)\big[T_{14}(s)K_{41}+T_{34}(s)K_{43}\big]ds\\
    =&\int_0^1T_{32}(1)T_{14}(s)+T_{14}(s)K_{31}+T_{34}(s)K_{33}\\
    &\qquad +T_{34}(1)\big[T_{14}(s)K_{41}+T_{34}(s)K_{43}\big]ds
\end{align*}
Substituting into $(II)$, one has
\begin{align}\nonumber
    -&(Z_{2,\lambda})^3T_{12}(1)+(Z_{2,\lambda})^1T_{32}(1)\\\nonumber
    =&\int_0^1{-T_{12}(1)}\bigg(T_{14}(s)K_{31}+T_{34}(s)K_{33}
    +T_{34}(1)\big[T_{14}(s)K_{41}+T_{34}(s)K_{43}\big]\bigg)\\\nonumber
    &+{T_{32}(1)}\bigg(T_{34}(s)K_{13}(s)-T_{14}(s)J(s)+T_{14}(1)\big[T_{14}(s)K_{41}+T_{34}(s)K_{43}\big]\bigg)ds\\\nonumber
    =&\int_0^1{-T_{12}(1)}\bigg(T_{14}(s)K_{31}+T_{34}(s)K_{33}\bigg)
    -T_{12}(1)T_{34}(1)\big[T_{14}(s)K_{41}+T_{34}(s)K_{43}\big]\\\nonumber
    &+{T_{32}(1)}\bigg(T_{34}(s)K_{13}(s)-T_{14}(s)J(s)\bigg)+T_{32}(1)T_{14}(1)\big[T_{14}(s)K_{41}+T_{34}(s)K_{43}\big]ds\\\nonumber
    =&\int_0^1{-T_{12}(1)}\bigg(T_{14}(s)K_{31}+T_{34}(s)K_{33}\bigg)
    +E(0,\beta^*)\big[T_{14}(s)K_{41}+T_{34}(s)K_{43}\big]\\\nonumber
    &+{T_{32}(1)}\bigg(T_{34}(s)K_{13}-T_{14}(s)J(s)\bigg)\,ds\\\label{II}
    =&\int_0^1{-T_{12}(1)}\bigg(T_{14}(s)K_{31}+T_{34}(s)K_{33}\bigg)
    +{T_{32}(1)}\bigg(T_{34}(s)K_{13}-T_{14}(s)J(s)\bigg)\,ds. 
\end{align}
\end{itemize}
Again we have used $E(0,\beta^*)=0$ since $\lambda(\beta^*)=0$ is an eigenvalue.
Finally, substituting \eqref{I} and \eqref{II} to \eqref{E_l}, we obtain
\begin{align}\label{numbere}\begin{split}
   E_\lambda(0,\beta^*)=&-(Z_{1,\lambda})^1T_{34}(1)+(Z_{1,\lambda})^3T_{14}(1)-(Z_{2,\lambda})^3T_{12}(1)+(Z_{2,\lambda})^1T_{32}(1)\\
    =&{T_{34}(1)}\int_0^1T_{12}(s)J(s)-T_{32}(s)K_{13}(s)\,ds\\
    &\quad +T_{14}(1)\int_0^1T_{12}(s)K_{31}(s)+T_{32}(s)K_{33}(s)\,ds\\
    &\quad +T_{12}(1)\int_0^1-T_{14}(s)K_{31}(s)
    -T_{34}(s)K_{33}(s)\,ds\\
    &\quad +{T_{32}(1)}\int_0^1 -T_{14}(s)J(s)+T_{34}(s)K_{13}(s)\,ds.
    \end{split}
\end{align}

We are now in a position to draw a conclusion based on the above computations.
{
\subsection{Bifurcation of zero eigenvalue}
Combining \eqref{bifur0}, \eqref{Ebeta} and \eqref{numbere}, we have, at $\beta^*$ with $\lambda(\beta^*)=0$, if $E_\lambda(0,\beta^*)\neq 0$, then 
\begin{align}\label{Dlambda}
 \lambda'(\beta^*)=\frac{2\beta^* }{p_0c^2(\theta_0)E_\lambda(0,\beta^*)}D''(\beta^*).
\end{align}

\begin{thm}\label{bif0ev} For the case $\gamma_1=|\gamma_2|,$ let $\beta^*$ be a local minimum of $D(\beta)$ (so that zero is an eigenvalue for the corresponding stationary solution). {Assume $E_\lambda(0,\beta^*)\neq 0$.}

 If $D''(\beta^*)\neq 0$ (so that $D''(\beta^*)> 0$), then there exists a real-valued function $\lambda(\beta)$ for $\beta\in (\beta^*-\delta,\beta^*+\delta)$ for some $\delta>0$ such that $E(\lambda(\beta),\beta)=0$ and the function $\lambda(\beta)$ satisfies $\lambda(\beta^*)=0$, $\lambda(\beta)$ takes opposite signs for $\beta$ on different sides of $\beta^*$. 
\end{thm}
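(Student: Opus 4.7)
The proof plan is a direct application of the Implicit Function Theorem (IFT) to the Evans function $E(\lambda, \beta)$ at the point $(\lambda, \beta) = (0, \beta^*)$, leveraging the explicit formulas already assembled in the preceding subsections.

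First, I would collect the three ingredients needed to invoke the IFT. Since $\beta^*$ is a local minimum of $D(\beta)$, we have $D'(\beta^*) = 0$, and relation \eqref{ed'} immediately gives $E(0, \beta^*) = 0$. The hypothesis $E_\lambda(0, \beta^*) \neq 0$ is assumed. Finally, by \eqref{Ebeta} together with $D''(\beta^*) > 0$, we have $E_\beta(0, \beta^*) \neq 0$. Since $A(x, \beta)$ and $B(x, \beta)$ depend smoothly on $\beta$ and the initial/terminal values defining $Z_1, \ldots, Z_4$ are constant, standard ODE dependence on parameters guarantees that $E(\lambda, \beta)$ is smooth (in fact real-analytic in $\lambda$) near $(0, \beta^*)$, and it is real-valued for real $\lambda$ because the coefficient matrix is real.

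Next, I would apply the scalar IFT to the equation $E(\lambda, \beta) = 0$ with $E_\lambda(0, \beta^*) \neq 0$. This produces a $\delta > 0$ and a smooth real-valued function $\lambda : (\beta^* - \delta, \beta^* + \delta) \to \mathbb{R}$ with $\lambda(\beta^*) = 0$ and $E(\lambda(\beta), \beta) \equiv 0$ on this interval. Implicit differentiation at $\beta^*$ then yields $\lambda'(\beta^*) = -E_\beta(0, \beta^*)/E_\lambda(0, \beta^*)$, which together with \eqref{Ebeta} gives exactly formula \eqref{Dlambda}:
\begin{equation*}
\lambda'(\beta^*) = \frac{2\beta^*}{p_0 c^2(\theta_0) \, E_\lambda(0, \beta^*)} \, D''(\beta^*).
\end{equation*}

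Since $\beta^* > 0$, $p_0 c^2(\theta_0) > 0$, $D''(\beta^*) > 0$, and $E_\lambda(0, \beta^*) \neq 0$, we conclude $\lambda'(\beta^*) \neq 0$. Thus $\lambda(\beta)$ is strictly monotone in a neighborhood of $\beta^*$ and, with $\lambda(\beta^*) = 0$, must take opposite signs on the two sides of $\beta^*$. (The sign of the nonzero value $\lambda'(\beta^*)$ is tied to the sign of $E_\lambda(0, \beta^*)$, which is the genuine content of the generic condition and is what will ultimately determine \emph{which} of the two bifurcating stationary solutions is linearly stable; this is the job of the next section, not of this theorem.)

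There is essentially no technical obstacle here beyond verifying the smoothness of $E$ in $(\lambda, \beta)$; everything else has already been packaged into formulas \eqref{ed'}, \eqref{Ebeta}, and \eqref{Dlambda}. The only subtle point worth a line in the write-up is that the local minimum condition forces $D''(\beta^*) \geq 0$, and the hypothesis $D''(\beta^*) \neq 0$ promotes this to the strict inequality $D''(\beta^*) > 0$ needed to guarantee transversality of the crossing.
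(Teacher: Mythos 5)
Your proposal is correct and matches the paper's approach: the paper derives the identity $\lambda'(\beta^*) = \frac{2\beta^*}{p_0 c^2(\theta_0) E_\lambda(0,\beta^*)}D''(\beta^*)$ (equation \eqref{Dlambda}) by combining \eqref{bifur0}, \eqref{Ebeta}, and the assumption $E_\lambda(0,\beta^*)\neq 0$, and the theorem follows immediately by the implicit function theorem applied to $E(\lambda,\beta)=0$, exactly as you lay out. Your only addition — making the smoothness of $E$ in $(\lambda,\beta)$ (via smooth parameter dependence of ODE solutions) explicit — is a reasonable piece of bookkeeping that the paper leaves implicit.
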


We conclude this section with some important comments.
  \begin{itemize}
  \item[(i)]  We emphasize that Theorem \ref{e=d'} provides a necessary and sufficient condition for the existence of zero eigenvalue of a stationary solution and Theorem \ref{bif0ev} only describes behavior of the zero eigenvalue in the process of the saddle-node bifurcation. While one can conclude that the bifurcated stationary solution with a {\em positive} eigenvalue (bifurcated from the zero eigenvalue) is spectrally  unstable, we could not claim that the other bifurcated stationary solution with a negative eigenvalue (bifurcated from the zero eigenvalue) is spectrally stable -- since we don't have additional information on the spectrum, even for the stationary solution associated to $\beta^*$. There are evidences from numerical  simulations of the spectrum and of nonlinear evolution of \eqref{intro} and \eqref{ibv} that, for  the stationary solution associated to $\beta^*$, the spectrum except the zero eigenvalue lies in the left half plane. If this is the case, then the above criterion for behavior of zero eigenvalue near the bifurcation is truly a criterion for spectral stability of stationary solutions.
    
  \item[(ii)] We could not prove  but believe that generically $E_\lambda(0,\beta^*)<0$, which implies $\lambda'(\beta^*)<0$, and hence, $D'(\beta_+)>0$ and $D'(\beta_-)<0$.  This belief is supported by the linear stability proved in Section \ref{SmallStab} for stationary solutions with {\em small} $\bar{u}$ that associated to a portion with $D'(\beta)<0$.

\item[(iii)] Everything we have done regarding the multiplicity and stability of stationary solutions is for the case $\gamma_1=|\gamma_2|.$ One can observe that the case $\gamma_1>|\gamma_2|$ can be analyzed in a similar way.  The main difference is that $h(\theta)$ is   always positive since $\gamma_1>0$, which leads to a smooth map $D(\beta)$ defined for $\beta\in\R_+$ and the map $D$ would have countably many local maximum points (evolved from poles for $\gamma_1=|\gamma_2|$) alternated between each local minimum if $\gamma_1-|\gamma_2|$ is small {\em but} might be monotone if $\gamma_1-|\gamma_2|$ is large.
 In particular, the formulas \eqref{ed'} and \eqref{Dlambda} should hold true for the case $\gamma_1>|\gamma_2|$.
 \end{itemize}
}

\section{Linear stability for steady states with small $\bar{u}$}\label{SmallStab}
\setcounter{equation}{0}

Consider the stationary system
\begin{align}
\begin{split}\label{ss*}
(g(\theta)u_x)_x&=0,\\
    c(\theta)(c(\theta)\theta_x)_x&-h(\theta)u_x=0,
    \end{split}
\end{align}
equipped with the boundary conditions
\begin{align}
    \begin{split}\label{bc*}
    u(0)=0,&\quad u(1)=\bar u>0;\quad
        \theta(0)=\theta(1)=\theta_0.
    \end{split}
\end{align}
Recall the linear system \eqref{Upert} and \eqref{phipert},
\begin{align}\label{linearU}
U_t&=\big[g(\theta)U_x\big]_x+\big[g'(\theta)\Theta u_x\big]_x+\big[h(\theta)\Theta_t\big]_x,\\\label{linearth}
\gamma_1\Theta_t&=c'(\theta)(c(\theta)\theta_x)_x\Theta+c(\theta)(c(\theta)\Theta)_{xx}-h'(\theta)\Theta u_x-h(\theta)U_x,
\end{align}
with $U(t,0)=U(t,1)=\Theta(t,0)=\Theta(t,1)=0.$

We now define the weighted energy $E_b(t)$ as
\begin{align}
E_b(t):=\int_0^1U^2+&b\gamma_1\Theta^2+c(\theta)\Theta_x^2\,dx
\end{align}
for a constant $b>0.$

In this section we use the following notation, we say $A\lesssim B$ to mean $A\leq C B$ for some constant $C$ depending only on the bounds of the functions $c,g$ and $h$ and possibly their derivatives $c',g'$ and $h'$ which are still bounded. Denote the bounds for $g,h,$ and $c$ as follows
\begin{align}
    g_L&\leq g\leq g_U;\quad h_L\leq h\leq h_U;\quad c_L\leq c\leq c_U.
\end{align}
\begin{lem}\label{lemmasmallubar}
    Let $(u,\theta)$ be a $C^2$ solution to \eqref{ss*}-\eqref{bc*}. Then for any $0<\varepsilon<1,$ if $0<\bar u\leq\varepsilon$ we have 
\begin{align*}
 \max\{\|u_x\|_{L^\infty}, \|{\theta_x}\|_{L^\infty}\}\lesssim \varepsilon.
\end{align*}
Moreover, we have
\begin{align}
    \|\theta_{xx}\|_{L^\infty}\lesssim \varepsilon.
\end{align}
\end{lem}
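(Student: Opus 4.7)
My plan is to derive all three bounds directly from the stationary system \eqref{ss*}, without invoking the integral representations of Section~\ref{onSS}, by combining the first integral $g(\theta)u_x\equiv M^2$ with a Rolle-type argument applied to $\theta$. The whole proof reduces to three short steps, each of which uses only the uniform upper and lower bounds on $g,h,c,c'$ (available on all of $\R$ under $\gamma_1=|\gamma_2|$ thanks to \eqref{positiveDamping} and the explicit formulas for $h,c$).

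First, integrating $(g(\theta)u_x)_x=0$ produces a constant $M^2\ge 0$ with $g(\theta)u_x\equiv M^2$, so that $u_x=M^2/g(\theta)$. The endpoint conditions then force $\bar u = M^2\int_0^1 g(\theta(s))^{-1}\,ds$, and the two-sided bound $g_L\le g\le g_U$ yields $g_L\bar u\le M^2\le g_U\bar u$; in particular $M^2\lesssim\bar u\le\varepsilon$, and hence $\|u_x\|_{L^\infty}\le M^2/g_L\lesssim\varepsilon$, as required.

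The bound on $\theta_x$ is the only slightly less immediate step. I would rewrite the second equation in \eqref{ss*} as $(c(\theta)\theta_x)_x = h(\theta)M^2/(c(\theta)g(\theta))$, whose right-hand side is pointwise bounded by a constant multiple of $M^2\lesssim\varepsilon$. Because $\theta(0)=\theta(1)=\theta_0$, Rolle's theorem guarantees an interior point $x^\ast\in(0,1)$ with $\theta_x(x^\ast)=0$; setting $\psi(x):=c(\theta(x))\theta_x(x)$ we therefore have $\psi(x^\ast)=0$ and $\|\psi_x\|_{L^\infty}\lesssim\varepsilon$. Integrating from $x^\ast$ over the unit interval gives $\|\psi\|_{L^\infty}\lesssim\varepsilon$, and dividing by $c(\theta)\ge c_L>0$ yields $\|\theta_x\|_{L^\infty}\lesssim\varepsilon$.

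For $\theta_{xx}$, I would expand $(c(\theta)\theta_x)_x = c'(\theta)\theta_x^2+c(\theta)\theta_{xx}$ and solve algebraically: $\theta_{xx} = c(\theta)^{-1}\bigl[h(\theta)M^2/(c(\theta)g(\theta)) - c'(\theta)\theta_x^2\bigr]$. The first bracketed term is $\lesssim M^2\lesssim\varepsilon$ by the first step and the second is $\lesssim \varepsilon^2\le\varepsilon$ by the second step, while $c(\theta)\ge c_L>0$, so $\|\theta_{xx}\|_{L^\infty}\lesssim\varepsilon$. No part of this argument is genuinely delicate; the main conceptual point is that the Rolle-theorem trick lets one bypass the explicit quadrature for $\theta$ and in particular avoids having to estimate $\beta=-G(\tilde\theta)$ in terms of $\bar u$, which would otherwise require a separate asymptotic analysis of $M=\sqrt{2}\int_{\tilde\theta}^{\theta_0}c/\sqrt{G-G(\tilde\theta)}\,d\theta$ near $\tilde\theta=\theta_0$.
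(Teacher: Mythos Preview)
Your argument is correct, and for the $\theta_x$-bound it is genuinely different from the paper's. Both proofs handle $u_x$ identically (via $g(\theta)u_x=M^2$ and $\bar u=M^2\int_0^1 g^{-1}$), and both deduce the $\theta_{xx}$-bound from the $\theta$-equation once $u_x$ and $\theta_x$ are controlled. The real divergence is in bounding $\theta_x$: the paper invokes the explicit quadrature from Section~\ref{onSS}, estimating $\sqrt{G(\theta)-G(\tilde\theta)}\le\sqrt{h_U/g_L}\sqrt{\theta-\tilde\theta}$, then using the double-integral formula \eqref{Eq4SS} for $\bar u$ to get $\theta_0-\tilde\theta\lesssim\varepsilon$, and finally plugging into \eqref{eq4theta} to obtain $\theta_x\lesssim\sqrt{p}\sqrt{\theta_0-\tilde\theta}\lesssim\varepsilon$. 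Your Rolle-theorem route bypasses all of this: from $\theta(0)=\theta(1)$ you extract a zero of $\psi=c(\theta)\theta_x$, then integrate the pointwise bound $|\psi_x|=|h u_x/c|\lesssim M^2\lesssim\varepsilon$ over the unit interval. This is shorter, needs no structure from Section~\ref{onSS}, and applies verbatim to \emph{any} $C^2$ solution of the BVP without identifying the turning point $\tilde\theta$ or appealing to monotonicity of $G$. The paper's approach, in exchange, makes the quantitative link $\theta_0-\tilde\theta\lesssim\bar u$ explicit, which is natural given that the rest of the paper is parametrized by $\beta=-G(\tilde\theta)$; your argument gives the same $L^\infty$-control on $\theta_x$ but does not directly locate $\tilde\theta$. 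One small wording point: you only use upper bounds on $|h|,|c'|$ and positive lower bounds on $g,c$---there is no positive lower bound on $h$ when $\gamma_1=|\gamma_2|$, but your proof does not need one.
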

\begin{proof}
    We start with $u_x,$
    The proof follows directly from the equations \eqref{ss*}. Integrating the $u$ equation and recall $g(\theta)u_x=p,$
    \begin{align*}
        \bar u=p\int_0^1\frac{1}{g(\theta(s))}\,ds.
    \end{align*}
    If $\bar u\leq\varepsilon$ then $p\leq g_U\varepsilon,$ where $g_U=\sup_{z\in\mathbb{R}}g(z).$
    
    Hence, using $u_x=\frac{p}{g(\theta)},$ \[\|u_x\|_{L^\infty}\lesssim\varepsilon.\]

    Now, for $\theta_x$ recall $\displaystyle G(\theta)=\int_{\theta_0}^\theta\frac{h(z)}{g(z)}\,dz.$ Direct estimate,
    \begin{align*}
        \sqrt{G(\theta)-G(\tilde\theta)}=\displaystyle\sqrt{\int_{\tilde\theta}^{\theta}\frac{h(z)}{g(z)}\,dz}\leq \sqrt{\frac{h_U}{g_L}}\sqrt{\theta-\tilde\theta}.
    \end{align*}
    
    By \eqref{Eq4SS}
    \begin{align*}
        \varepsilon\geq \bar u&=2\int_{\tilde\theta}^{{\theta_0}}\frac{c(\theta)}{\sqrt{G(\theta)-G(\tilde\theta)}}\,d\theta\,\int_{\tilde\theta}^{{\theta_0}}\frac{c(\theta)}{g(\theta)\sqrt{G(\theta)-G(\tilde\theta)}}\,d\theta\gtrsim\theta_0-\tilde\theta.
    \end{align*}
    Using \eqref{eq4theta} and the above estimates
    \begin{align*}
    \theta_x=\frac{\sqrt{2p}}{c(\theta)}\sqrt{G(\theta)-G(\tilde\theta)}\lesssim\sqrt{\varepsilon}\sqrt{\theta-\tilde\theta}\leq \sqrt{\varepsilon}\,\sqrt{\theta_0-\tilde\theta}\lesssim\varepsilon.
    \end{align*}
    Hence, \[\|\theta_x\|_{L^\infty}\lesssim\varepsilon.\]

    For the last assertion, we use the $\theta$ equation in \eqref{ss*} along with $\|u_x\|_{L^\infty}\lesssim\varepsilon$ and $\|\theta_x\|_{L^\infty}\lesssim\varepsilon,$ to get 
    \[\|\theta_{xx}\|_{L^\infty}\lesssim\varepsilon.\]
     The proof is complete.
\end{proof}
\begin{thm}\label{Stab}
    Let $(\theta,u)$ be a $C^2$ solution to \eqref{ss*} and \eqref{bc*} with $0<\bar u<\varepsilon$ for $\varepsilon$ sufficiently small.
    If the constant $b>0$ is sufficiently small, then the energy
    \begin{align}
E_b(t):=\int_0^1U^2+&b\gamma_1\Theta^2+c(\theta)\Theta_x^2\,dx
\end{align}
    associated with smooth solutions to the system \eqref{linearU} and \eqref{linearth} decays exponentially. Specifically, 
    \begin{align}
        E_b(t)<E_b(0)e^{-6Lt},
    \end{align}
    where $L>0$ depends on the constant $b,$ the norms $\|u_x\|_{L^\infty}, \|\theta_x\|_{L^\infty}, \|\theta_{xx}\|_{L^\infty}$ and the bounds of the given functions $c(\theta),g(\theta)$ and $h(\theta).$
\end{thm}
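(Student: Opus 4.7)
The plan is to prove a differential inequality $E_b'(t)\le -2C\,E_b(t)$ for some $C>0$, which by Gronwall yields $E_b(t)\le E_b(0)e^{-2Ct}$, and setting $L=C/3$ gives the stated bound $E_b(t)<E_b(0)e^{-6Lt}$. The computation begins by decomposing
\begin{equation*}
\tfrac12 E_b'(t)=A_1+A_2+A_3,\quad A_1:=\int_0^1 UU_t\,dx,\quad A_2:=b\gamma_1\int_0^1\Theta\Theta_t\,dx,\quad A_3:=\int_0^1 c(\theta)\Theta_x\Theta_{xt}\,dx,
\end{equation*}
and then substituting the linearized equations \eqref{linearU}--\eqref{linearth} into each.

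After substitution and integration by parts, using $U|_{\partial[0,1]}=\Theta|_{\partial[0,1]}=0$ (and the induced $\Theta_t|_{\partial[0,1]}=0$), three dominant dissipative terms emerge. From $A_1$ one reads off $-\int g(\theta)U_x^2\,dx$. For $A_2$, rewrite the $\Theta$-equation in divergence form $\gamma_1\Theta_t=(c^2(\theta)\Theta_x)_x+R(\theta,\theta_x,\theta_{xx})\Theta-h(\theta)U_x$ (with remainder $R=O(\varepsilon)$ coming from $c(c\Theta)_{xx}-(c^2\Theta_x)_x$); after one integration by parts $A_2$ yields $-b\int c^2(\theta)\Theta_x^2\,dx$. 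For $A_3$, integrate by parts in $x$ and then solve the $\Theta$-equation for $c(\theta)\Theta_{xx}$ to produce $-\int \gamma_1\Theta_t^2/c(\theta)\,dx$. All remaining pieces are cross terms to be absorbed.

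These cross terms fall into two classes. The first carries an explicit smallness factor --- a coefficient of $u_x$, $\theta_x$, or $\theta_{xx}$ (each $O(\varepsilon)$ by Lemma \ref{lemmasmallubar}), or an explicit $b$; representative examples are $\int g'u_x\Theta U_x$, $b\int hU_x\Theta$, $b\int R\Theta^2$, together with the $O(\varepsilon)$ remainders from the expansion of $(c\Theta_x)_x$ and from $R$. Young's inequality combined with the Poincar\'e estimates $\int U^2\le C_P\int U_x^2$ and $\int\Theta^2\le C_P\int\Theta_x^2$ absorbs each such term into a fraction of one of the three coercive dissipations, provided $b$ is first chosen small and then $\bar u$ is chosen small. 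The second class is a single $O(1)$ coupling $-\int h(\theta)\Theta_tU_x(1+1/c(\theta))\,dx$, obtained by combining the $(h\Theta_t)_x$-contribution in $A_1$ with the $-hU_x\Theta_t/c(\theta)$ remainder in $A_3$.

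This $O(1)$ coupling is the main obstacle. It must be absorbed simultaneously against both the $U_x^2$- and the $\Theta_t^2$-dissipations, and the Young's splitting $|h\Theta_tU_x(1+1/c)|\le\mu U_x^2+h^2(1+1/c)^2\Theta_t^2/(4\mu)$ closes only when $\mu$ can be chosen so that $\mu<g_L$ and $h_U^2(1+1/c_L)^2c_U<4\mu\gamma_1$. This is delicate in general, but for $\bar u$ small the image of $\theta(x)$ shrinks to a neighborhood of $\theta_0$, so the effective bounds reduce to their values at $\theta_0$, and the sharp positivity $g(\theta_0)\gamma_1-h(\theta_0)^2\ge\bar C\gamma_1$ from \eqref{positiveDamping} supplies the margin needed for a compatible $\mu$ to exist. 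With the coupling absorbed, the surviving dissipations give $\tfrac12 E_b'(t)\le -c_1\|U_x\|_{L^2}^2-bc_2\|\Theta_x\|_{L^2}^2-c_3\|\Theta_t\|_{L^2}^2$, and a final application of Poincar\'e (together with the bound $c(\theta)\le c_U$ against the $\Theta_x^2$-dissipation and $1\le C_P$ against the $U^2$, $\Theta^2$ pieces) converts this into $E_b'(t)\le -2CE_b(t)$, completing the argument.
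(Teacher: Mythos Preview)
Your overall plan---multiply by $U$, by $b\Theta$, by $\Theta_t$, integrate by parts, and use Lemma~\ref{lemmasmallubar} plus Poincar\'e to absorb the $O(\varepsilon)$ terms---is exactly the paper's approach. The gap is in your treatment of the single $O(1)$ coupling.

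If you differentiate $\int c(\theta)\Theta_x^2\,dx$ literally and substitute for $c(\theta)\Theta_{xx}$ from \eqref{linearth}, the $\Theta_t$-dissipation you extract is $-\int\frac{\gamma_1}{c(\theta)}\Theta_t^2\,dx$ and the cross term picks up the extra $1/c$ factor, so you must absorb $-\int h(1+1/c)U_x\Theta_t\,dx$ against $-\int gU_x^2\,dx$ and $-\int\frac{\gamma_1}{c}\Theta_t^2\,dx$. The pointwise condition for any Young (or completing-the-square) splitting to close is
\[
h^2\Bigl(1+\tfrac{1}{c}\Bigr)^2<\,4\,g\cdot\frac{\gamma_1}{c},\qquad\text{i.e.}\qquad h^2\Bigl(c+2+\tfrac{1}{c}\Bigr)<4\gamma_1 g.
\]
Since $c+2+1/c\ge 4$ with equality only at $c=1$, this is \emph{strictly stronger} than \eqref{positiveDamping} whenever $c(\theta_0)\neq 1$; for instance with $c(\theta_0)=4$ and $h^2/(\gamma_1 g)(\theta_0)=0.9$ the left side is $5.625\cdot\gamma_1 g>4\gamma_1 g$. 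So the claim that ``the sharp positivity $g\gamma_1-h^2\ge\bar C\gamma_1$ supplies the margin'' is not justified, and shrinking $\theta(x)$ to $\theta_0$ does not help.

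The paper avoids this by multiplying \eqref{linearth} directly by $\Theta_t$; that produces $\tfrac12\partial_t\int c^2(\theta)\Theta_x^2\,dx$ (so effectively the weight is $c^2$, not $c$), yields the clean dissipation $-\int\gamma_1\Theta_t^2\,dx$, and leaves the balanced cross term $-2\int h\,\Theta_tU_x\,dx$. The paper then completes the square \emph{pointwise} via $\sigma(x)=\frac{1}{g}\bigl(\varepsilon+\frac{h^2}{\gamma_1-\varepsilon}\bigr)$, and $\sigma<1$ is exactly \eqref{positiveDamping}. If you adopt this multiplier (equivalently, replace $c$ by $c^2$ in the third piece of $E_b$), your argument goes through; note also that the pointwise $\sigma(x)$ makes your appeal to ``$\theta(x)\to\theta_0$'' unnecessary, since \eqref{positiveDamping} holds uniformly in $\theta$.
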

\begin{proof}
Let $(\theta,u)$ be a solution to the system \eqref{ss*} and \eqref{bc*}. Let $b>0$ be a constant. Multiply \eqref{linearU} by $U$ and integrate by parts,
\begin{align}\label{Uint}
    \frac{d}{dt}\frac{1}{2}\int_0^1U^2\,dx+\int_0^1g(\theta)U_x^2\,dx=\int_0^1-g'(\theta)u_x\Theta U_x-h(\theta)\Theta_tU_x\,dx
\end{align}
{Let $0<\sigma(x)<1$ be any function (to be chosen later)}. Apply Cauchy inequality to \eqref{Uint} to get,
\begin{align*}
    \frac{d}{dt}\frac{1}{2}&\int_0^1U^2\,dx+ \int_0^1(1-\sigma)g(\theta)U_x^2\,dx\\
    &= \int_0^1-\sigma\,g(\theta) U_x^2\,dx+\int_0^1-g'(\theta)u_x\Theta U_x-h(\theta)\Theta_tU_x\,dx\\ 
    &\leq  \int_0^1-\sigma\, g(\theta)U_x^2\,dx+\frac{1}{2}\int_0^1g'^2|u_x|U_x^2\,dx\\
    &\qquad +\frac{1}{2}\int_0^1|u_x|\Theta^2\,dx-\int_0^1h(\theta)\Theta_tU_x\,dx\\ 
    &\leq \int_0^1\big[-\sigma\, g(\theta)+\frac{1}{2}\|g'^2u_x\|_{L^\infty}\big]U_x^2\,dx+\frac{\|u_x\|_{L^\infty}}{2}\int_0^1\Theta^2\,dx\\
    &\qquad-\int_0^1h(\theta)\Theta_tU_x\,dx.
\end{align*}
By the assumption $\bar u<\varepsilon$ and Lemma \ref{lemmasmallubar} we have
\begin{align}\nonumber\label{uest}
  \frac{1}{2}\frac{d}{dt}\int_0^1U^2\,dx+ \int_0^1(1-\sigma)g(\theta)U_x^2\,dx
  \lesssim& \int_0^1\big[\varepsilon-\sigma g(\theta)\big]U_x^2\,dx\\
  &+ \varepsilon\int_0^1\Theta^2\,dx-\int_0^1h(\theta)\Theta_tU_x\,dx.
\end{align}
Now, multiply \eqref{linearth} by $b\Theta$ and integrate,
\begin{align*}
    \frac{d}{dt}\frac{1}{2}\gamma_1b\int_0^1\Theta^2\,dx=&\int_0^1 bc'(\theta)(c(\theta)\theta_x)_x\Theta^2+bc(\theta)(c(\theta)\Theta)_{xx}\Theta\\
    &-bh'(\theta) u_x\Theta^2+b(h(\theta)\Theta)_xU\,dx.
    \end{align*}
    Integrate by parts the second term in the right hand side to get
    \begin{align}\label{phiint}\begin{split}
    \frac{d}{dt}\frac{1}{2}\gamma_1b\int_0^1\Theta^2\,dx+\int_0^1bc^2(\theta)\Theta^2_{x}\,dx
    =& \int_0^1bc(\theta)c'(\theta)\theta_{xx}\Theta^2 -b(c^2)'\theta_x\Theta\Theta_x\\
    &-bh'(\theta) u_x\Theta^2+b(h(\theta)\Theta)_xU\,dx.
    \end{split}
\end{align}
Using Poinca\'e inequality for the second term in the left hand side,
\begin{align*}
\int_0^1bc^2\Theta_x^2\,dx&\geq\frac{1}{2}bc^2_L\int_0^1\Theta_x^2\,dx+\frac{1}{2}bc^2_L\int_0^1\Theta^2_x\,dx\\
    &\geq \frac{1}{2}bc^2_L\int_0^1\Theta_x^2\,dx+\frac{1}{8}bc^2_L\int_0^1\Theta^2\,dx
\end{align*}
\eqref{phiint} becomes,
\begin{align}\nonumber
    b\frac{\gamma_1}{2}\frac{d}{dt}&\int_0^1\Theta^2\,dx+\frac{1}{2}bc^2_L\int_0^1\Theta^2_{x}\,dx+\frac{1}{8}bc^2_L\int_0^1\Theta^2\,dx\\\nonumber
    \leq&
    \int_0^1 b\frac{1}{2}(c^2)'\theta_{xx}\Theta^2-b(c^2)'\theta_x\Theta\Theta_x -bh'(\theta) u_x\Theta^2+b(h(\theta)\Theta)_xU\,dx\\ \nonumber
    \leq& \frac{b}{2}\bigg[\|(c^2)'\theta_{xx}\|_{L^\infty}+2\|h'u_x\|_{L^\infty}+\|h'\theta_x\|^2_{L^\infty}+\|(c^2)'\|_{L^\infty}^2\|\theta_x\|^2_{L^\infty}\bigg]\int_0^1\Theta^2\,dx\\
    &+\frac{b}{2}\big(\|h\|^2_{L^\infty}+1\big)\int_0^1\Theta_x^2\,dx+b\int_0^1U^2\,dx.
\end{align}
By the assumption $\bar u<\varepsilon$ and Lemma \ref{lemmasmallubar} we have
\begin{align}\label{phiest}
    \frac{1}{2}\frac{d}{dt}\gamma_1b\int_0^1\Theta^2\,dx&+\frac{1}{2}bc^2_L\int_0^1\Theta^2_{x}\,dx+\frac{1}{8}bc^2_L\int_0^1\Theta^2\,dx\\\nonumber
    \lesssim &\,\varepsilon \,b\int_0^1\Theta^2\,dx+b\int_0^1U^2\,dx+\frac{b}{2}\int_0^1\Theta_x^2\,dx.
\end{align}
Multiplying \eqref{linearth} by $\Theta_t,$ we obtain
\begin{align}\label{phixint}\begin{split}
    \frac{1}{2}\frac{d}{dt}\int_0^1c^2(\theta)\Theta_x^2\,dx=&\int_0^1-\gamma_1\Theta_t^2+\big[c'(\theta)(c(\theta)\theta_x)_x+c(\theta)(c'(\theta)\theta_x)_x\big]\Theta\Theta_t\\
    &-h'(\theta)u_x\Theta\Theta_t-h(\theta)U_x\Theta_t\,dx
    \end{split}
\end{align}
By the assumption $\bar u<\varepsilon,$ Lemma \ref{lemmasmallubar} and Young inequality we have
\begin{align}\label{phixest}\begin{split}
    \frac{1}{2}\frac{d}{dt}\int_0^1c^2(\theta)\Theta_x^2\,dx\lesssim &\int_0^1\big[-\gamma_1+\varepsilon\big]\Theta_t^2\,dx\\
&+\varepsilon\int_0^1\Theta^2\,dx-\int_0^1h(\theta)U_x\Theta_t\,dx.
\end{split}
\end{align}

Summing the three inequalities \eqref{uest}, \eqref{phiest}, and \eqref{phixest},

\begin{align*}\nonumber
    \frac{1}{2}\frac{d}{dt}&\int_0^1U^2+\gamma_1b\Theta^2+c(\theta)\Theta^2_x\,dx\\
    &\qquad+\int_0^1(1-\sigma)g(\theta)U^2_x\,dx+\frac{1}{2}bc_L^2\int_0^1\Theta_x^2\,dx+\frac{1}{8}bc_L^2\int_0^1\Theta^2\,dx\\\nonumber
    &\lesssim\int_0^1[-\sigma g+\varepsilon]U_x^2-2h\Theta_tU_x+[-\gamma_1+\varepsilon]\Theta^2_t\,dx\\
    &\qquad+[\varepsilon b+2\varepsilon]\int_0^1\Theta^2\,dx+b\int_0^1U^2\,dx\\
    &=-\int_0^1\bigg(\sqrt{\gamma_1-\varepsilon}\Theta_t+\sqrt{\sigma g-\varepsilon}U_x\bigg)^2\,dx\\
    &\qquad+[\varepsilon b+2\varepsilon]\int_0^1\Theta^2\,dx+b\int_0^1U^2\,dx
\end{align*}
{where the last line follows by choosing $\sigma(\theta(x);\varepsilon):=\frac{1}{g(\theta(x))}(\varepsilon+\frac{h^2(\theta(x))}{\gamma_1-\varepsilon}).$}

Using the crucial inequality \eqref{positiveDamping} which reads $\frac{h^2}{\gamma_1 g}<1$ keeps $\sigma(x)<1$ provided $\varepsilon>0$ small enough. Now we have
\begin{align*}
   \frac{1}{2}\frac{d}{dt}&\int_0^1U^2+\gamma_1b\Theta^2+c(\theta)\Theta^2_x\,dx
   \lesssim  -\int_0^1(1-\sigma)g(\theta)U^2_x\,dx-\frac{1}{2}bc_L^2\int_0^1\Theta_x^2\,dx\\
   &\qquad\qquad -[-\varepsilon b-2\varepsilon+\frac{1}{8}bc_L^2]\int_0^1\Theta^2\,dx+b\int_0^1U^2\,dx\\
   &\qquad\leq  -\big[\frac{1}{2}\min\{(1-\sigma)g\}-b\big]\int_0^1U^2\,dx\\
   &\qquad\qquad-[-\varepsilon b-2\varepsilon+\frac{1}{8}bc_L^2]\int_0^1\Theta^2\,dx-\frac{1}{2}bc_L^2\int_0^1\Theta_x^2\,dx\\
  &\qquad \leq -\min\{r_1,r_2,r_3\}\int_0^1U^2+\gamma_1b\Theta^2+c(\theta)\Theta^2_x\,dx
\end{align*}
where
\begin{align*}
    r_1&:=\frac{1}{2}\min\{\big[1-\sigma(\theta(x);\varepsilon)\big]g(\theta(x))\}-b,\quad
    r_2:=\frac{-\varepsilon b-2\varepsilon+\frac{1}{8}bc_L^2}{\gamma_1b},\quad
    r_3:=\frac{1}{2}b\frac{c^2_L}{c_U}
\end{align*}
where $r_i>0$ for $i=1,2,3$ whenever $0<b<\frac{1}{2}\displaystyle\min_{0\leq x\leq1}\{\big[1-\sigma(\theta(x);\varepsilon)\big]g(\theta(x))\}$ and $\varepsilon>0$ sufficiently small.

{ Recall that $\sigma(\theta(x);\varepsilon)$ is chosen to be $\sigma(\theta(x);\varepsilon)=\frac{1}{g(\theta(x))}(\frac{h^2(\theta(x))}{\gamma_1-\varepsilon}+\varepsilon).$ Note that by \eqref{positiveDamping},  $$(1-\sigma)g\to g(\theta)-\displaystyle\frac{h^2(\theta)}{\gamma_1}\geq \bar C>0\quad \text{as\,\,\,} \varepsilon\to 0.$$ Therefore, for $\varepsilon>0$ arbitrarily small there is always $0<b<\frac{1}{2}\displaystyle\min_{0\leq x\leq1}\{(1-\sigma(\theta(x);\varepsilon))g(\theta(x))\}.$

    Moreover, again, by \eqref{positiveDamping} 
    \[\sigma(\theta(x);0)=\frac{h^2(\theta(x))}{\gamma_1g(\theta(x))}<1\] so by continuity $\sigma(\theta(x);\varepsilon)<1$ for $\varepsilon>0$ sufficiently small.
}
Hence, the exponential decay follows
 \[  E(t)< E(0)e^{-L\,t} \quad\text{where}\quad L:=\min\big\{r_1,r_2,r_3\big\}.\]
 
 This completes the proof.
\end{proof}

\noindent
{\bf Acknowledgment.}
    {The authors thank the referee for his/her careful review and helpful comments on our original submission}. This project was initiated while M. Sofiani was a Ph.D. student at the University of Kansas and continued as a postdoctoral researcher at King Abdullah University of Science and Technology (KAUST). The authors gratefully acknowledge the support of both institutions.\\
     
     \noindent
     {\bf Funding.}
     M. Sofiani acknowledges the support of King Abdullah University of Science and Technology (KAUST) for his postdoctoral research through Fund ID BAS/1/1652-01-01.
     
\bigskip

\phantomsection
\addcontentsline{toc}{section}{References}
\bibliographystyle{plain}
\bibliography{ref}

\end{document}